\documentclass{amsart}

\usepackage{graphicx}
\usepackage{amsfonts}
\usepackage{amssymb}
\usepackage{amsmath}
\usepackage{xcolor} 
\usepackage{emptypage}

\usepackage[hidelinks]{hyperref} 
\hypersetup{
    colorlinks=true,
    linkcolor=red,
    citecolor=blue,
    filecolor=red,
   urlcolor=red}

\newtheorem{theorem}{Theorem}

\newtheorem{lemma}{Lemma}
\newtheorem{proposition}{Proposition}
\newtheorem{corollary}{Corollary}

\newcommand{\GG}{\mathbf{G}}
\newcommand{\cc}{\mathfrak{c}}
\newcommand{\hcc}{\hat{\mathfrak{c}}}
\newcommand{\tcc}{\tilde{\mathfrak{c}}}

\newcommand{\fconnect}[3]{{#1}\ {\overset{{#2}}{\searrow}}\ {#3}}
\newcommand{\bconnect}[3]{{#1}\ {\overset{{#2}}{\swarrow}}\ {#3}}
\newcommand{\dconnect}[3]{{#1}\ {\overset{{#2}}{\downarrow}}\ {#3}}
\newcommand{\boldord}[1]{\widehat{\boldsymbol{#1}}}

\begin{document}

\title{On closed Ramsey numbers of small countable ordinals}
\author{Necdet Duman}
\address{Department of Mathematics, Middle East Technical University, 06800, \c{C}ankaya, Ankara, Turkey\\
}
\email{necdet.duman@metu.edu.tr}
\author{Özge Gönül}
\address{Department of Mathematics, Middle East Technical University, 06800, \c{C}ankaya, Ankara, Turkey\\
}
\email{gonul.ozge@metu.edu.tr}
\author{Burak Kaya}
\address{Department of Mathematics, Middle East Technical University, 06800, \c{C}ankaya, Ankara, Turkey\\
}
\email{burakk@metu.edu.tr}
\author{Jayatra Saxena}
\address{Department of Mathematics, Middle East Technical University, 06800, \c{C}ankaya, Ankara, Turkey\\
}
\email{jayatra.saxena@metu.edu.tr}
\author{Y\.{I}\u{g}\.{I}than Tamer}
\address{Department of Mathematics, Middle East Technical University, 06800, \c{C}ankaya, Ankara, Turkey\\
}
\email{yigithan.tamer@metu.edu.tr}

\keywords{topological partition relations, Ramsey numbers, topological Ramsey numbers, closed Ramsey numbers}
\subjclass[2020]{03E02,03E10}

\begin{abstract} This paper is a contribution to the investigation of closed partition relations for pairs of countable ordinals. As our main result, we prove that
\[\omega^4 \cdot (n-2)+1 < R^{cl}(\omega \cdot n+1,3)<\omega^5\]
for every integer $n \geq 3$. This result significantly improves the existing upper and lower bounds for these closed Ramsey numbers. In addition, we prove that
\[\omega^{\theta}\nrightarrow_{cl} (\omega^{\alpha},3)^2\]
whenever $1 \leq \alpha \leq \theta<\omega_1$ satisfy $\theta < R(\alpha,3)$. This result asymptotically improves the existing lower bounds for $R^{cl}(\omega^n,3)$ and slightly strengthens the existing necessary condition for being a topological partition ordinal.
\end{abstract}

\maketitle

\tableofcontents

\section{Introduction}

As a natural extension of Ramsey's theorem \cite{Ramsey29} to Cantor's transfinite numbers, partition relations for ordinals and cardinals were introduced by Erd\H os and Rado in \cite{ErdosRado53} and \cite{ErdosRado56}, and have been thoroughly studied since then. In \cite{Baumgartner86}, Baumgartner extended this partition calculus to countable topological spaces.

Following Baumgartner, Caicedo and Hilton initiated an investigation of topological and closed Ramsey numbers for pairs of countable ordinals in \cite{CaicedoHilton17}. This investigation was recently contributed by Mermelstein in \cite{Mermelstein19} and \cite{Mermelstein20}, and by Kaya and Sağlam in \cite{KayaSaglam21}. This paper is another contribution to this systematic investigation. In order to be able to state the existing results and explain where the results of this paper fit, let us recall some basic notation and definitions.

Let $\alpha, \beta$ be ordinals endowed with their respective order topologies and let $X \subseteq \alpha$ be a topological subspace. The subspace $X$ is said to be \textit{order-homeomorphic} to $\beta$ if there exists an order-isomorphism $f: X \rightarrow \beta$ which is also a homeomorphism. In this case, we shall say that $X$ is a \textit{closed copy} of $\beta$.\footnote{We would like to warn the reader that $X$ being a closed copy of $\beta$ does \textit{not} imply that $X$ is a closed subset of $\alpha$, unless $\beta$ is a successor ordinal. Though, it does imply that $X$ is closed under taking supremum below $\sup(X)$, which is why this terminology borrowed from \cite{CaicedoHilton17} seems fit.}

Given ordinals $\alpha,\beta,\gamma$, we write
$\gamma \rightarrow_{cl} (\alpha,\beta)^2$ in the case that for every function $\cc: [\gamma]^2 \rightarrow \{0,1\}$ there exists a subspace $X \subseteq \gamma$ such that
\begin{itemize}
\item $X$ is a closed copy of $\alpha$ and $[X]^2 \subseteq \cc^{-1}(0)$, or
\item $X$ is a closed copy of $\beta$ and $[X]^2 \subseteq \cc^{-1}(1)$.
\end{itemize}
where $[X]^2=\{Y \subseteq X: |Y|=2\}$ denotes the set of  two element subsets of $X$. For simplicity, we shall say that $X$ is \textit{homogeneous of color {\color{red}red}} (respectively, \textit{{\color{blue}blue}}) whenever we have $[X]^2 \subseteq \cc^{-1}(0)$ (respectively, $[X]^2 \subseteq \cc^{-1}(1)$.) 
The closed Ramsey number $R^{cl}(\alpha,\beta)$ is defined to be the least ordinal $\gamma$ such that $\gamma \rightarrow_{cl} (\alpha,\beta)^2$ if such an ordinal exists at all.

In \cite{CaicedoHilton17}, among other interesting results, Caicedo and Hilton proved that $R^{cl}(\omega \cdot n +1, k+1)<\omega^{\omega}$ and $R^{cl}(\omega^2,k) \leq \omega^{\omega}$ for all integers $n,k \geq 1$. In \cite{Mermelstein19} and \cite{Mermelstein20}, after introducing the notion of a canonical coloring, Mermelstein computed the closed Ramsey numbers $R^{cl}(\omega \cdot 2,3)=\omega^3 \cdot 2$ and $R^{cl}(\omega^2,3)=\omega^6$. Using Mermelstein's machinery on canonical colorings, in \cite{KayaSaglam21}, Kaya and Sağlam showed that $\omega^2 \cdot n < R^{cl}(\omega+n,3) < \omega^2 \cdot (n+1)$, completely determining the leading term of $R^{cl}(\omega+n,3)$ in its Cantor normal form as $\omega^2 \cdot n$. The following summarizes the existing state of the art for $R^{cl}(\alpha,3)$ with $\omega < \alpha \leq \omega^2$.
\begin{align*}
&R^{cl}(\omega+1,3)=\omega^2+1 & \text{\cite{CaicedoHilton17}}\\
&R^{cl}(\omega+2,3)=\omega^2\cdot 2+\omega+2 & \text{\cite{CaicedoHilton17}}\\
\omega^2 \cdot n <\ &R^{cl}(\omega+n,3) < \omega^2 (n+1)\ \ \text{ for } n\geq 3& \text{\cite{KayaSaglam21}}\\
&R^{cl}(\omega \cdot 2,3)=\omega^3 \cdot 2 & \text{\cite{Mermelstein19}}\\
&R^{cl}(\omega^2,3)=\omega^6 & \text{\cite{Mermelstein20}}
\end{align*}
Even though the proof of \cite[Theorem 6.1]{CaicedoHilton17} indirectly provides some upper bounds for $R^{cl}(\omega \cdot n +1,3)$, which require the computation of various Ramsey numbers and closed pigeonhole numbers, these upper bounds seems to be already beaten by the upper bound $\omega^6$. The main objective of this paper is to somewhat fill the gap between $R^{cl}(\omega \cdot 2,3)$ and $R^{cl}(\omega^2,3)$ and prove the following.
\begin{theorem}\label{maintheorem} $\omega^4 < R^{cl}(\omega \cdot 2+1,3)<\omega^5$ and $\omega^4 \cdot (n-2)+1 < R^{cl}(\omega \cdot n+1,3)<\omega^5$ for every integer $n \geq 3$.    
\end{theorem}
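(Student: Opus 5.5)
The proof splits into the two strict upper bounds and the lower bounds, and the upper bound is where the strictness issue must be handled. \textbf{We must exhibit an ordinal strictly below $\omega^5$ that already arrows.} So the target is
\[
\omega^4\cdot m \rightarrow_{cl} (\omega\cdot n+1,3)^2
\]
for some finite $m=m(n)$, possibly with a small additive tail. Since $\omega^4\cdot m<\omega^5$, this gives $R^{cl}(\omega\cdot n+1,3)<\omega^5$. I would choose $m$ as a classical Ramsey-type number, for instance $m=R(n+1,3)$ or a pigeonhole-style number, tuned so the combinatorics below close up.

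For the upper bound, fix a coloring $\cc$ of $[\omega^4\cdot m]^2$ with no blue triangle. The first step is to pass to a Mermelstein-style canonical coloring on a closed copy of $\omega^4$ inside each block $[\omega^4\cdot i,\omega^4\cdot(i+1))$. This loses nothing in order type and makes the color of a pair depend only on finitely many ``types'' determined by the Cantor normal forms of its two points.

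On one block $\omega^4$, the key lemma I would prove is: either there is a red closed copy of $\omega\cdot k+1$ for every $k$, or the canonical type forces blue edges between the top points and their approximating sequences. The reason the exponent $4$ suffices is that the four levels give enough room. Building $\omega\cdot n+1$ requires $n$ convergent $\omega$-sequences with limits plus a final limit. In the bad case, the absence of blue triangles forces each level to be red-homogeneous along the relevant type.

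Across blocks, I would then treat the $m$ top limits $\omega^4\cdot(i+1)$ together with one representative limit from each block. I would color the pairs of blocks by the canonical cross-type and apply finite Ramsey $m\to(n+1,3)$. A blue triangle is excluded, so we obtain $n+1$ mutually red blocks. Inside these blocks we pick the red $\omega$-sequences with limits, and we arrange them consecutively so that the union is order-homeomorphic to $\omega\cdot n+1$. The final point should be the supremum, in a last block, of a sequence of limits; this is why a tail of the form $\omega^4\cdot m+1$ or a chosen limit inside the last block may be needed.

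\textbf{The main obstacle is the single-block case analysis of canonical types.} One must show that the no-blue-triangle hypothesis really forces enough red structure inside $\omega^4$. I would try first to reduce it to Mermelstein's classification used for $R^{cl}(\omega\cdot 2,3)=\omega^3\cdot 2$, one level higher, handling the extra ``$+1$'' via the limit points of the $\omega^3$-level.

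For the lower bounds, I would construct explicit colorings with no blue triangle and no red closed $\omega\cdot n+1$.

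\begin{itemize}
\item For $n=2$, color $\omega^4$. Write each point by its leading Cantor normal form exponent $e\in\{0,1,2,3\}$. Color a pair blue exactly when the smaller point is a limit of the relevant type approximated by the larger point's lower terms. This forms a bipartite-like pattern between consecutive levels, so blue triangles cannot occur.
\item Check that any red closed copy of $\omega\cdot 2+1$ would need three nested limit points, and that the levels available force a blue edge among them.
\item For general $n$, concatenate $n-2$ copies of the $\omega^4$ coloring and add one point on top. Color pairs from different copies red, except that the top point is joined blue to a carefully chosen set, with blue between copies arranged triangle-free.
\item Count the copies: a red closed $\omega\cdot n+1$ can use at most one $\omega\cdot 2$-segment per copy beyond the first. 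Hence it fits only when there are at least $n-1$ copies, which gives $\omega^4\cdot(n-2)+1\nrightarrow_{cl}(\omega\cdot n+1,3)^2$.
\end{itemize}

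The delicate point here is that the limits lying between copies, namely the points $\omega^4\cdot j$, must be colored so that they cannot serve as extra limit points of a red copy. I would color each such point blue to the tail of the preceding copy and verify that this creates no blue triangle.
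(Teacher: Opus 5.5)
\textbf{Upper bound.} Your cross-block step fails as stated, and the ideas that make $\omega^4\cdot M(n)+1$ arrow are missing. After canonizing (globally, via Mermelstein's theorem, not block by block), the interaction between two trees is directed and level-dependent, since $\tcc(i,j;i',j')$ and $\tcc(i',j';i,j)$ can differ. The paper's example on $\omega\cdot 3+1$ shows both problems. Three trees can be pairwise ``blue'' with no blue triangle of points. Level-$0$ sequences in two trees can admit no infinite subsets with all cross pairs red. So there is no two-colouring of pairs of blocks to which $m\to(n+1,3)$ meaningfully applies, and ``pick red sequences and arrange them consecutively'' is unjustified.

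The paper applies $R(\cdot,3)$ only to the top points $\omega^4\cdot i$; these are singletons, so the colouring among them is genuine. It then uses Mermelstein's forbidden patterns. A top sends blue edges to at most one lower level of its own tree and to at most one level of each other tree; otherwise two large level-sets would be pairwise red and contain a red closed copy of $\omega^2$. So the four levels below each top leave three levels $k_0<k_1<k_2$ red to it. Recording which of these each foreign top hits gives a $3$-colouring of a complete directed graph. The avoidance number $A_3(k)\le R(k,k,k)$ then selects $R(K_n^*,L_3)$ trees, each with a bottom level red to all selected tops. Red homogeneity between sequences in different trees comes from the classical $R(\omega\cdot n,3)=\omega\cdot R(K_n^*,L_3)$, applied to cofinal sequences in those bottom levels. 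This is where the exponent $4$ is used. The single-block analysis you flag as the main obstacle reduces to one forbidden pattern.

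Also, a closed copy of $\omega\cdot n+1$ has exactly $n$ limit points, the last being the limit of the $n$-th sequence. A final point that is a supremum of infinitely many limits would require a red closed $\omega^2$, which cannot be forced below $R^{cl}(\omega^2,3)=\omega^6$.

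\textbf{Lower bounds.} Your $n=2$ colouring contains a red closed copy of $\omega\cdot 2+1$. If blue pairs only join consecutive CB levels, then $\{\omega\cdot k+1\}_{k<\omega}\cup\{\omega^2\}\cup\{\omega^2+\omega\cdot k+1\}_{k<\omega}\cup\{\omega^2\cdot 2\}$, which lies in levels $0$ and $2$, is red. More generally, no colouring of $\omega^4$ depending only on CB ranks works. Every pair of levels $a<b\le 3$ carries such a copy, which forces all six level pairs blue and hence a blue triangle. (Also, $\omega\cdot 2+1$ has two limit points, not three.) This is why the paper introduces matryoshka colourings. Its $\cc_0$ uses forward and backward connections such as $\fconnect{2}{4}{0}$, $\bconnect{3}{4}{0}$ and $\fconnect{3}{4}{2}$, whose colour depends on the relative order of boxes inside a larger box. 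It is verified by exhausting the CB ranks of the two sequences, the two limits and their least common $<^*$-ancestor.

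For general $n$, your plan to colour the points $\omega^4\cdot j$ so that they cannot serve as limit points cannot succeed. If the points of the $j$-th tree that are red to $\omega^4\cdot j$ are cofinal in it, Ramsey's theorem gives a red sequence converging to $\omega^4\cdot j$. Otherwise $\omega^4\cdot j$ is blue to a final segment of its tree. That segment is order-homeomorphic to $\omega^4$, so (absent a red closed $\omega\cdot n+1$) it contains a blue pair. This is exactly the blue triangle your ``blue to the tail'' prescription creates.

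The paper instead lets each top be a limit and adds $\dconnect{4}{5}{3}$, $\fconnect{4}{5}{0}$ and $\bconnect{4}{5}{0}$. Then every red closed $\omega\cdot 2+1$ inside one tree has its first sequence at rank $0$ and its second limit at the top, and each tree carries at most two limits. So $n$ limits in $n-2$ trees give two trees with two limits each, and the top of one is blue to the rank-$0$ sequence of the other. Your count (``at most one $\omega\cdot 2$-segment per copy beyond the first'') has no comparable justification.
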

Since we do not exactly pinpoint the leading term of $R^{cl}(\omega \cdot n+1,3)$ in its Cantor normal form, there remains work to be done to determine the exact asymptotic behavior of $R^{cl}(\alpha,3)$ for $\omega \leq \alpha \leq \omega^2$. Nevertheless, our result is still able to illustrate the interesting phenomenon that $R^{cl}(\alpha,3)$ jump from $\omega^3 \cdot 2$ to above $\omega^4$ and jump from below $\omega^5$ to $\omega^6$ as $\alpha$ approaches $\omega^2$.

In regard to the closed Ramsey numbers $R^{cl}(\alpha,3)$ where $\alpha \geq \omega^2$, we do not carry out an extensive analysis. On the other hand, as a byproduct of the approach that we are using, we are able to prove the following negative closed partition relation for powers of $\omega$.
\begin{theorem}\label{toppartmaintheorem} Let $1 \leq \alpha\leq \theta < \omega_1$. If ${\theta}<R(\alpha,3)$, then $\omega^{\theta}\nrightarrow_{cl} (\omega^{\alpha},3)^2$ where $R(\cdot,\cdot)$ denotes the classical ordinal Ramsey number.  
\end{theorem}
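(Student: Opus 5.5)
We prove Theorem~\ref{toppartmaintheorem} by transferring a bad classical coloring of $\theta$ to a bad closed coloring of $\omega^\theta$ through the Cantor--Bendixson structure of $\omega^\theta$. Since $\theta<R(\alpha,3)$, fix a coloring $d:[\theta]^2\to\{0,1\}$ with no red-homogeneous set of order type $\alpha$ and no blue triangle. For $0<\xi<\omega^\theta$, write $\xi=\omega^{\beta_1}+\dots+\omega^{\beta_k}$ in Cantor normal form with $\beta_1\ge\dots\ge\beta_k$. Let $\rho(\xi)=\beta_k$ be its Cantor--Bendixson rank, so $\rho(\xi)<\theta$. For $\xi<\eta$, define $\cc(\{\xi,\eta\})$ as follows. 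If $\rho(\xi)\neq\rho(\eta)$, set $\cc(\{\xi,\eta\})=d(\{\rho(\xi),\rho(\eta)\})$. If $\rho(\xi)=\rho(\eta)$, color the pair red. The point $0$ (and, if convenient, isolated points) can be colored red to everything.

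To rule out blue triangles: a blue pair must have distinct ranks. A blue triangle $\{\xi,\eta,\zeta\}$ therefore has three distinct ranks forming a $d$-blue triangle in $\theta$, which is impossible. The real work is excluding a red closed copy $X$ of $\omega^\alpha$.

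Given such an $X$, let $f:\omega^\alpha\to X$ be the order-homeomorphism. A homeomorphism onto a subspace does not preserve rank in the ambient space. However, since $X$ is closed under suprema below $\sup X$, a point of rank $\gamma$ in $X$ is a limit of points of $X$ of rank $\gamma'$ for every $\gamma'<\gamma$. It follows that $\gamma\mapsto\rho(f(\cdot))$ is monotone in the appropriate sense: $\rho(f(\zeta))\ge\rho_{\omega^\alpha}(\zeta)$, where $\rho_{\omega^\alpha}$ denotes rank inside $\omega^\alpha$.

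Now pick inductively, for each $\gamma<\alpha$, a point $\zeta_\gamma\in\omega^\alpha$ of rank $\gamma$. Choose these carefully so that the ranks $\rho(f(\zeta_\gamma))$ are strictly increasing in $\gamma$. The plan is to use the many points of each rank and a pressing-down style choice, selecting $\zeta_\gamma$ as a limit of points whose images have rank above $\sup_{\gamma'<\gamma}\rho(f(\zeta_{\gamma'}))$. The set $\{\rho(f(\zeta_\gamma)):\gamma<\alpha\}$ then has order type $\alpha$ and consists of distinct ranks. Since all pairs in $X$ are red, this set is $d$-red-homogeneous of type $\alpha$, a contradiction.

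\textbf{Main obstacle.} The hardest step is guaranteeing distinct ambient ranks with the right order type. A closed copy of $\omega^\alpha$ could in principle place many points of the same ambient rank. To handle this, I would first try strengthening the same-rank rule, coloring same-rank pairs by a comparison that forces blue, while still avoiding blue triangles. If that fails, I would argue directly: within $X$, for a point $x$ of $X$-rank $\gamma+1$, the points of $X$-rank $\gamma$ converging to $x$ have ambient ranks eventually below $\rho(x)$. Iterating this gives a chain of order type $\alpha$ of strictly increasing ambient ranks along one "branch" of $X$. Extracting such a chain for limit $\alpha$ requires a diagonal choice and care with cofinal sequences; this is where I expect the delicate bookkeeping to lie.
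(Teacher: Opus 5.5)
Your coloring is the paper's $\cc_{\GG}$ (the graph on the levels $V_1^\delta$ induced by $d$), and your blue-triangle argument is the paper's. You also correctly reduce the rest to one claim: for every closed copy $X\subseteq\omega^\theta$ of $\omega^\alpha$, the set $\{\rho(x):x\in X\}$ has order type at least $\alpha$. Any subset of order type $\alpha$ is then automatically $d$-red. But this claim is the whole content of the theorem, and your proposal does not establish it. The inequality $\rho(f(\zeta))\ge CB(\zeta)$ only bounds individual ranks from below. Both routes you sketch fail as described:
\begin{itemize}
\item Your forward recursion can get stuck. The rule ``take $\zeta_\gamma$ to be a limit of points whose images have rank above $\sup_{\gamma'<\gamma}\rho(f(\zeta_{\gamma'}))$'' does not stop earlier choices from exhausting the available ranks. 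Take $\alpha=\theta=2$ (here $R(2,3)=3$) and $X=[\omega,\omega^2)$. Choosing $\zeta_0=0$ gives $\rho(f(0))=\rho(\omega)=1$, and no point of $\omega^2$ has rank above $1$.
\item Your single-branch fallback (a $<^*$-chain in the tree structure of $X$) fails at limit levels. One can choose a closed copy of $\omega^\omega$ inside $\omega^{\omega+1}$ with $f(\omega^n)=\omega^\omega\cdot n$ for all $n\ge 1$, building it block by block. Every infinite $<^*$-chain of $\omega^\omega$ is eventually of the form $\omega^{n_0},\omega^{n_1},\dots$. So along every branch the ambient ranks are eventually constantly $\omega$, and the witnesses must be found off the branch.
\end{itemize}

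What is missing is exactly the paper's Lemma: if $X\subseteq\omega^\theta+1$ is a closed copy of $\omega^\alpha+1$, then $CB(X)$ has order type at least $\alpha+1$, hence at least $\alpha$ for closed copies of $\omega^\alpha$. The paper proves it top-down by induction on $\alpha$, using your ``eventually below'' observation applied to $\eta=\sup X$. If $X$ is a copy of $\omega^{\alpha'+1}+1$ split into blocks $X_n\cong\omega^{\alpha'}+1$, then some whole block $X_{\hat n}$ has all its ambient ranks below $CB(\eta)$. At limit stages cofinitely many blocks do. So the inductively obtained rank sets lie below $CB(\eta)$ and stack up.

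Alternatively, your forward idea works with one specific choice. Let $m_\gamma=\min\{\rho(f(\zeta)):CB(\zeta)=\gamma\}$. For $\gamma'<\gamma$, a point $\zeta$ realizing $m_\gamma$ is the supremum of an increasing sequence of points of rank $\gamma'$. Their images eventually have ambient rank $<m_\gamma$, so $m_{\gamma'}<m_\gamma$. Thus $\{m_\gamma:\gamma<\alpha\}$ is $d$-red of order type $\alpha$, with no bookkeeping at limits.

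Finally, drop the option of coloring isolated points red to everything: for $\alpha=\theta=2$ that would make all of $\omega^2$ red.
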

For finite powers of $\omega$, Theorem \ref{toppartmaintheorem} implies $R^{cl}(\omega^n,3) \geq \omega^{R(n,3)-1}+1$ which asymptotically strengthens the existing bound $R^{cl}(\omega^n,3) \geq \omega^{5n-4}$ given by \cite{Mermelstein20} because the finite Ramsey numbers $R(n,3)$ are known to have asymptotic order of magnitude $n^2/\log(n)$ \cite{Kim95}.

We shall also provide a necessary condition for being a topological partition ordinal. Recall that an ordinal $\alpha$ is said to be a \textit{partition ordinal} (respectively, a \textit{topological partition ordinal}) in the case that $\alpha$ satisfies the partition relation $\alpha \rightarrow (\alpha,3)^2$ (respectively, $\alpha \rightarrow_{top} (\alpha,3)^2$.)\footnote{Here the classical partition relation $\gamma \rightarrow (\alpha,\beta)^2$ (respectively, the topological partition relation $\gamma \rightarrow_{top} (\alpha,\beta)^2$) is defined similarly to the closed partition relation $\gamma \rightarrow_{cl} (\alpha,\beta)^2$, except that the desired red and blue homogeneous sets are required to be only order-isomorphic (respectively, homeomorphic) to $\alpha$ and $\beta$.}

In \cite[Question 11.2]{CaicedoHilton17}, Caicedo and Hilton asked whether or not there exists a countable topological partition ordinal greater than $\omega$. They noted that any such ordinal has to be of the form $\omega^{\omega^{\beta}}$ and that this question is equivalent to the version of the question in which $\rightarrow_{top}$ is replaced by $\rightarrow_{cl}$. An immediate corollary of Theorem \ref{toppartmaintheorem} is the following.

\begin{corollary}\label{toppartordcorollary} Let $1 \leq \alpha < \omega_1$. If $\omega^{\alpha}$ is a topological partition ordinal, then $\alpha$ is a partition ordinal.    
\end{corollary}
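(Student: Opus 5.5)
We argue by contraposition, using Theorem \ref{toppartmaintheorem} with $\theta=\alpha$. Suppose $\alpha$ is not a partition ordinal, i.e.\ $\alpha \nrightarrow (\alpha,3)^2$. Then $R(\alpha,3)$, the least ordinal $\gamma$ with $\gamma \rightarrow (\alpha,3)^2$, is strictly greater than $\alpha$. This uses two facts. First, for countable $\alpha$ the classical Ramsey number $R(\alpha,3)$ exists and is countable; this is the Erd\H{o}s--Rado / Erd\H{o}s--Milner theorem, and it is implicitly assumed whenever Theorem \ref{toppartmaintheorem} is applied. Second, the relation $\gamma \rightarrow (\alpha,3)^2$ is upward monotone in $\gamma$: a coloring of $[\gamma']^2$ with $\gamma'\ge\gamma$ restricts to a coloring of $[\gamma]^2$. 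Hence $\alpha \nrightarrow (\alpha,3)^2$ gives $R(\alpha,3)\neq\alpha$. Also $R(\alpha,3)\ge\alpha$, since a red homogeneous copy of $\alpha$ needs at least $\alpha$ points. Together these give $\alpha < R(\alpha,3)$.

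Now apply Theorem \ref{toppartmaintheorem} with $\theta=\alpha$. The hypotheses $1\le\alpha\le\theta<\omega_1$ and $\theta<R(\alpha,3)$ hold, so $\omega^{\alpha}\nrightarrow_{cl}(\omega^{\alpha},3)^2$. It remains to turn this into the failure of the topological relation $\omega^\alpha \rightarrow_{top} (\omega^\alpha,3)^2$. Here we use the remark quoted from \cite{CaicedoHilton17} that, for ordinals of the relevant form, the question of being a topological partition ordinal is equivalent to the version with $\rightarrow_{cl}$.

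To make this direct, argue as follows. If $\omega^\alpha$ is a topological partition ordinal, then by the Caicedo--Hilton observation $\omega^\alpha$ has the form $\omega^{\omega^\beta}$. For such ordinals Caicedo and Hilton show $\omega^\alpha\rightarrow_{top}(\omega^\alpha,3)^2$ iff $\omega^\alpha\rightarrow_{cl}(\omega^\alpha,3)^2$. The underlying reason is that any subspace of $\omega^{\alpha}$ homeomorphic to $\omega^{\alpha}$ contains a closed copy of $\omega^{\alpha}$, and the blue requirement, a triangle, is the same in both settings. So $\omega^\alpha\rightarrow_{cl}(\omega^\alpha,3)^2$, contradicting the previous paragraph.

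The main obstacle is this last step, passing from $\rightarrow_{top}$ to $\rightarrow_{cl}$. The plan is to cite the equivalence noted in \cite{CaicedoHilton17} rather than reprove it. If a self-contained argument were needed, I would show that every homeomorphic copy of $\omega^{\omega^\beta}$ inside $\omega^{\omega^\beta}$ contains an order-homeomorphic copy, using Cantor--Bendixson rank arguments. The rest is bookkeeping with the definition of $R(\alpha,3)$.
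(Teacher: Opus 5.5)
Your proof is correct and is essentially the paper's argument in contrapositive form: both apply Theorem \ref{toppartmaintheorem} with $\theta=\alpha$ and use that $\omega^{\alpha}\rightarrow_{top}(\omega^{\alpha},3)^2$ implies $\omega^{\alpha}\rightarrow_{cl}(\omega^{\alpha},3)^2$. The paper gets that last step directly from Hilton's result that $\omega^{\alpha}$ is order-reinforcing (Corollary 2.17), which is the ``underlying reason'' you give and holds for every $\omega^{\beta}$ with $\beta\neq 0$, so your detour through the form $\omega^{\omega^{\beta}}$ is unnecessary.
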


Combining Corollary \ref{toppartordcorollary} with the necessary conditions for being a partition ordinal given in \cite{GalvinLarson74} and \cite[Theorem 29.(3)]{Schipperus10}, one obtains that any countable topological partition ordinal greater than $\omega$ is of the form
\[\omega^{\omega},\ \omega^{\omega^2} \text{ or }\ \omega^{\omega^{\omega^{\beta}}}\]
where $\beta$ is the sum at most $3$ indecomposable ordinals. This result slightly improves the necessary condition stated in \cite[Section 11]{CaicedoHilton17}. All that said, we have not been able to establish the existence of a countable topological partition ordinal greater than $\omega$. Thus \cite[Question 11.2]{CaicedoHilton17} remains open.

Finally, we would like to point out that, Theorem \ref{maintheorem} and Theorem \ref{toppartmaintheorem} still hold if one replaces $R^{cl}(\cdot,\cdot)$ and $\rightarrow_{cl}$ by $R^{top}(\cdot,\cdot)$ and $\rightarrow_{top}$ respectively. This is because ordinals of the form $\omega^{\beta}$ with $\beta \neq 0$ and $\omega \cdot n +1$ with $n \in \mathbb{Z}^+$ are order-reinforcing \cite[Corollary 2.17]{Hilton16}. This paper is organized as follows.

In Section \ref{sectionpreliminaries}, after recalling necessary definitions and notation, we briefly overview the machinery of canonical colorings introduced by \cite{Mermelstein19} and their interpretation as directed graphs.

In Section \ref{sectionupperbound}, after introducing the necessary finite Ramsey theoretic notions that will be used in the proof, using the directed graph approach provided in Section \ref{sectionpreliminaries}, we prove a positive closed partition relation that establishes the upper bounds in Theorem \ref{maintheorem}.

In Section \ref{sectionlowerbound}, after providing an overview of the existing graph-based approach to create colorings that witness various negative closed partition relations, we introduce matryoshka colorings as a stronger alternative. Using matryoshka colorings, we then prove negative closed partition relations that establish the lower bounds in Theorem \ref{maintheorem}.

In Section \ref{sectiontoppart}, we prove Theorem \ref{toppartmaintheorem} and Corollary \ref{toppartordcorollary} using a simple lemma regarding the set of Cantor-Bendixson ranks of elements of closed copies of ordinals.

\textbf{Acknowledgements.} This work was supported by the Scientific and Technological Research Council of Türkiye (TÜBİTAK) under the Project Number 124F353, of which the third author is the project manager and through which the remaining authors are funded as members of the project. The authors thank TÜBİTAK for its support.

\section{Preliminaries}\label{sectionpreliminaries}

The main machinery that we shall use in our upper bound argument is the notion of a canonical coloring introduced by \cite{Mermelstein19}. While we are not going to overview the canonization process of an arbitrary coloring in this paper and simply take the existence of canonical colorings as a blackbox, we still need to introduce some preliminary notions and notation in order to be able to state the definition. In this section, we shall briefly recall these prerequisites and the main machinery, together with a directed graph interpretation.

\subsection{Ordinals below $\omega^{\omega}$ as forests and their large subsets}\label{ordinalpreliminaries} In this subsection, following \cite{CaicedoHilton17} and \cite{Mermelstein19}, we will recall how to represent ordinals below $\omega^{\omega}$ as forests, that is, acyclic graphs.

Let $\gamma$ be a non-zero ordinal with the Cantor normal form
$$ \gamma=\omega^{\alpha_1} \cdot k_1 + \omega^{\alpha_2} \cdot k_2 + \dots + \omega^{\alpha_m} \cdot k_m$$
where $\alpha_1 > \alpha_2 > \dots > \alpha_m$ are ordinals and $0<k_1,k_2,\dots,k_m <\omega$ are positive integers. \textit{The Cantor-Bendixson rank of $\gamma$} is defined to be the ordinal $CB(\gamma)=\alpha_m$. For the case $\gamma=0$, we define $CB(0)=0$. We also define $L(\gamma)=k_m$ and $L(0)=1$. Consider the partial order relation $<^*$ on the class of ordinals defined by
\[ \beta <^* \alpha \text{ if and only if } \alpha=\beta+\omega^{\delta} \text{ for some } \delta>CB(\beta)\]
We shall write $\beta \triangleleft^* \alpha$ in the case that $\alpha$ is the necessarily unique immediate successor of $\beta$. This partial order relation first appeared in \cite{CaicedoHilton17}. It is not difficult to check that, by joining the vertices satisfying $\beta \triangleleft^* \alpha$, ordinals below $\omega^{\omega}$ can be represented as forests, an example illustration of which shall be given later.

By ungrouping the terms in the Cantor normal of $\gamma$, we can also express $\gamma$ as $$\gamma=\omega^{\beta_1}+\omega^{\beta_2}+\dots+\omega^{\beta_n}$$where $\beta_1 \geq \beta_2 \geq \dots \geq \beta_n$ are ordinals. In this case, for each ordinal $\alpha \leq \gamma$, we set
$$CNF_{\gamma}(\alpha)=\min\{1 \leq k \leq n\ |\ \alpha \leq \omega^{\beta_1}+\omega^{\beta_2}+\dots+\omega^{\beta_k}\}$$
In the case that $\gamma<\omega^\omega$, all these notions can be interpreted pictorially in the forest representation of $\gamma$ as follows. An ordinal $\alpha<\gamma$ resides
\begin{center}
at the $L(\alpha)^{\text{th}}$ place of the $CB(\alpha)^{\text{th}}$ level of the $CNF_{\gamma}(\alpha)^{\text{th}}$ tree.\footnote{In the case that $\gamma$ is a limit ordinal, while computing $CNF_{\gamma}(\alpha)$ by counting on which tree $\alpha$ resides, we count the last $\omega$-many trees as a single tree as if there were a vertex above them all.}\footnote{The case $\alpha=0$ is an exceptional case where we pretend that $0$ and $1$ are both in the $1^{\text{st}}$ place of the $0^{\text{th}}$ level of the $1^{\text{st}}$ tree. This slight inconvenience does not cause any harm in our arguments.}
\end{center}
Given an ordinal $\alpha \leq \gamma<\omega^{\omega}$ and $i<\omega$, consider the sets
\[T(\alpha)=\{\beta: \beta <^* \alpha\} \cup \{\alpha\}\ \ \text{ and }\ \ T^{=i}(\alpha)=\{\beta \in T(\alpha): CB(\beta)=i\}\]
which respectively give the vertices of the subtree below the vertex $\alpha$ and its $i$-th level. It is not difficult to verify that the subspace $T(\alpha) \subseteq \gamma$ is order-homeomorphic to the ordinal $\omega^{CB(\alpha)}+1$ whenever $CB(\alpha) \neq 0$. For later use, let us define a special collection of ``large subsets" of the levels $T^{=i}(\alpha)$. These large subsets first appeared in \cite{Mermelstein19} as a part of the canonization process of a coloring.

We first handle the powers of $\omega$. Let $k,r \in \mathbb{N}$. For every $0 \leq m \leq k$, we define the sets $F(\omega^k)^r_m$ recursively as follows.
\[F(\omega^k)^r_m=\begin{cases} \{ \omega^k\} & \text{ if } m=k,\\ 
\bigcup_{\alpha \in F(\omega^k)^r_{m+1}} \{ \beta \in \omega^k :\ \beta \triangleleft^* \alpha,\ \ L(\beta)>r  \}& \text{ if } m< k. \end{cases} \]
Since $T(\alpha)$ is order-isomorphic to $\omega^{CB(\alpha)}+1$ whenever $CB(\alpha) \neq 0$, this definition can be naturally extended to all ordinals $\alpha<\omega^\omega$ in the following manner: For every $\alpha<\omega^\omega$ with $CB(\alpha) = 0$, the set $F(\alpha)^r_0$ is defined as $F(\alpha)^r_0=\{\alpha\}$. For every $\alpha<\omega^\omega$ with $CB(\alpha) \neq 0$, the set $F(\alpha)^r_m$ is defined as
\[ F(\alpha)^r_m=\rho^{-1}\left[F\left(\omega^{CB(\alpha)}\right)^r_m\right]\]
where $\rho: T(\alpha) \rightarrow \omega^{CB(\alpha)}+1$ is the unique order-isomorphism. Before we conclude this subsection, we refer the reader to Figure \ref{forestrepresentation} for an illustration of these notions on the ordinal $\gamma=\omega^3+\omega^2 \cdot 2$.

\begin{figure}
    \centering
    \includegraphics[width=1.0\textwidth]{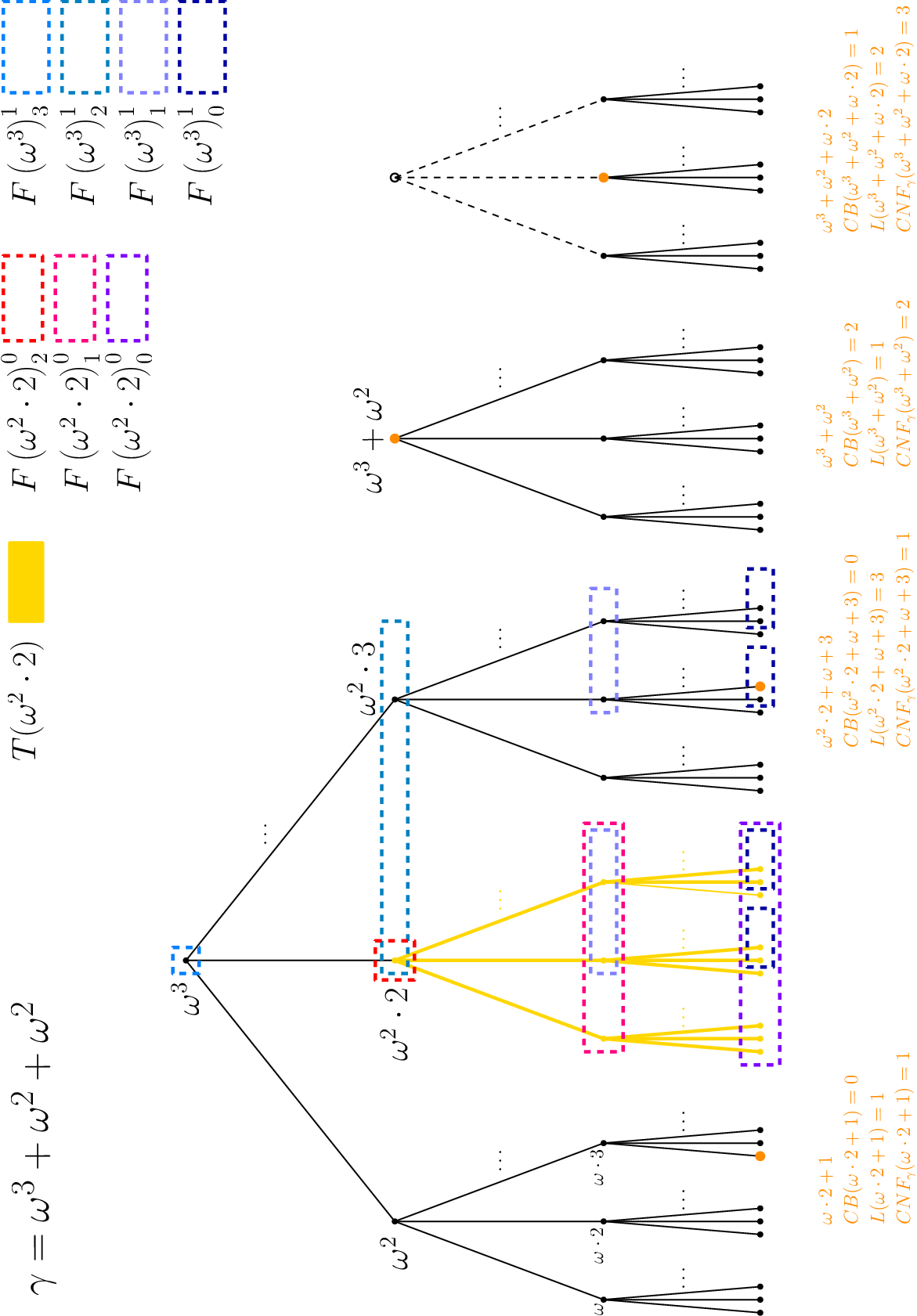}
    \caption{A representation of the ordinal $\gamma=\omega^3+\omega^2 \cdot 2$ as a forest}
    \label{forestrepresentation}
\end{figure}

\subsection{Canonical colorings}\label{specialtypescoloring} In this subsection, we recall the definition of a canonical coloring first introduced in \cite{Mermelstein19}. A few words of warning are in order. First, in the definition that we shall present, following the presentation style of \cite[Section 2.2]{KayaSaglam21} from which we copied various parts verbatim, we unpacked some auxiliary definitions originally present in \cite{Mermelstein19} because these auxiliary definitions are not required for the purposes of this paper. Second, we restrict our definition to only successor ordinals so that we avoid potential trivialities in inequalities arising from the non-existence of the top vertex of the last tree in the forest representation.

Let $\gamma<\omega^{\omega}$ be a successor ordinal, say,
$$\gamma=\omega^{m_1}+\omega^{m_2}+\dots+\omega^{m_n}+1$$ where $m_1 \geq m_2 \geq \dots \geq m_n$ are natural numbers.

A (pair) \textit{coloring} of $\gamma$ with two colors is a map $\cc: [\gamma]^2 \rightarrow \{0,1\}$. A coloring $\cc: [\gamma]^2 \rightarrow \{0,1\}$ is said to be \textit{normal} if, for all $\beta_1<^*\beta_2<\gamma$, the color of the pair $\{\beta_1,\beta_2\}$ only depends on $CB(\beta_1)$, $CB(\beta_2)$ and $CNF_\gamma(\beta_2)$, that is, there exists a function $\hcc(i,j,k)$ defined for $1 \leq i \leq n$ and $0 \leq k<j \leq m_i$ that does not depend on $\beta_1$ and $\beta_2$ such that
\[
\mathfrak{c}(\{\beta_1, \beta_2\}) = \hcc(CNF_\gamma(\beta_2), CB(\beta_2), CB(\beta_1))
\]
Pictorially speaking, this means that the color of a pair of ordinals that are $<^*$-related within the same tree in the forest representation of $\gamma$ solely depends on the levels.

A normal coloring $\cc:[\gamma]^2 \rightarrow \{0,1\}$ is called \textit{canonical} if the following additional conditions are met.
\begin{itemize}
\item[(a)] For every $\alpha < \gamma$, there exists some $r \in \mathbb{N}$ such that
\begin{center}
    for each $\theta<\gamma$ and ${\ell} \leq CB(\theta)$\footnote{Here, we require ${\ell} < CB(\theta)$ in the case $\alpha=\theta$. Moreover, the reader is also referred to ``Important remark" in \cite[Section 2.2]{KayaSaglam21} in regard to this inequality being non-strict.} there exists a color $c_{\alpha}(\theta,{\ell}) \in \{0,1\}$ with
\[ \{\beta \in T^{={\ell}}(\theta):\ \mathfrak{c}(\{ \alpha, \beta \}) =c_{\alpha}(\theta,{\ell}) \} \supseteq F(\theta)^r_{\ell}\]
\end{center}
In other words, for every $\alpha<\gamma$, the singleton coloring map $\cdot \mapsto \cc(\{\alpha,\cdot\})$ is constant on a ``large subset" of every level of every tree.
\item[(b)] For every $\alpha,\beta<\gamma$ such that $CNF_{\gamma}(\alpha)=CNF_{\gamma}(\beta)$ and $CB(\alpha)=CB(\beta)$, we have
\begin{center}
    $c_{\alpha}(\omega^{m_1}+\dots+\omega^{m_k},{\ell})=c_{\beta}(\omega^{m_1}+\dots+\omega^{m_k},{\ell})$
\end{center}
for every $1 \leq k \leq n$ with $k \neq CNF_{\gamma}(\alpha)$ and for every $0 \leq {\ell}\leq m_k$. That is, the color $c_{\alpha}(\theta,\ell)$ defined in Item (a) is solely determined by $CNF_{\gamma}(\alpha)$ and $CB(\alpha)$.
\end{itemize}
It follows from Item (b) that, given a canonical coloring $\cc:[\gamma]^2 \rightarrow \{0,1\}$, one can construct a function $\tcc(i,j;k,\ell)$ defined for $1 \leq k \neq i \leq n$ and $0 \leq j \leq m_i$ and $0 \leq \ell \leq m_k$ such that
\[ \tcc(i,j;k,{\ell})=c_{\alpha}(\omega^{m_1}+\dots+\omega^{m_k},{\ell})\]
where $\alpha \in \gamma$ is an arbitrary ordinal satisfying $CNF_{\gamma}(\alpha)=i$, $CB_{\gamma}(\alpha)=j$. It is not difficult to verify that, for a canonical coloring $\cc:[\gamma]^2 \rightarrow \{0,1\}$, the following conditions are equivalent.
\begin{itemize}
\item $\tcc(i,j;k,{\ell})=c$
\item For all $\alpha$ with $CNF_{\gamma}(\alpha)=i$ and $CB(\alpha)=j$, there exists $r \in \mathbb{N}$ such that $\cc(\{\alpha,\beta\})=c$ for every $\beta \in F\left(\omega^{m_1}+\dots+\omega^{m_k}\right)^r_{{\ell}}$.
\end{itemize}
The main theorem regarding canonical colorings is the following.
\begin{theorem}\cite[Proposition 3.11]{Mermelstein19}\label{maincanonicalcoloringtheorem} For every coloring $\cc: [\gamma]^2 \rightarrow \{0,1\}$ there exists a closed copy $I \subseteq \gamma$ of $\gamma$ such that
\begin{itemize}
    \item For all $\alpha,\beta\in I$, we have $\alpha<^* \beta$ iff $\rho_I(\alpha)<^* \rho_I(\beta)$ where $\rho_I: I \rightarrow \gamma$ is the unique order-homeomorphism, and
    \item the map $\cc_I: [\gamma]^2 \rightarrow \{0,1\}$ given by $\cc_I(\{\alpha,\beta\})=\cc\left(\left\{\rho_I^{-1}(\alpha),\rho_I^{-1}(\beta)\right\}\right)$ is a canonical coloring.
\end{itemize}    
\end{theorem}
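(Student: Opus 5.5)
The plan is to build the closed copy $I$ by a sequence of thinnings inside the forest representation of $\gamma$. Every thinning will keep the following kind of object: a subforest in which each retained vertex of level $j>0$ keeps infinitely many $\triangleleft^*$-children, and each retained child is again such a subtree. Any such subforest, with the retained vertices of level $j$ sitting as the suprema of their retained children, is a closed copy of $\gamma$. Its order-homeomorphism $\rho_I$ sends $\triangleleft^*$-chains to $\triangleleft^*$-chains, so it preserves $<^*$. This is the first bullet of Theorem \ref{maincanonicalcoloringtheorem}.

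\textbf{Step 1: normality.} I argue tree by tree and by induction on the height $m_i$. For a vertex $v$ of level $j$ and a level $k<j$, I want $\cc(\{\beta,v\})$ to be constant over the level-$k$ descendants $\beta$ of $v$. I obtain this by working downward from $v$. Among the infinitely many children of $v$ there are infinitely many on which, inductively, this color is constant with the same value, and I keep only those. Since there are only finitely many levels $k$, finitely many iterations suffice. Next I make the resulting color independent of $v$. Among the siblings at each level, infinitely many have the same finite pattern $(k\mapsto\text{color})$, so I keep only those, recursively from the top of the tree. After this, $\hcc(i,j,k)$ is well defined.

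\textbf{Step 2: condition (a).} Fix $\alpha$. The map $\beta\mapsto\cc(\{\alpha,\beta\})$ is a 2-coloring of points. On each level of each tree I use pigeonhole over the infinitely many children, recursively downward, to make it constant on that level. The finitely many exceptional children, meaning those of small $L$, are what the parameter $r$ in $F(\theta)^r_\ell$ absorbs. There are infinitely many $\alpha$, and each requires its own thinning. I handle them by a fusion (diagonal) construction: enumerate the retained vertices as $\alpha_0,\alpha_1,\dots$. At stage $s$, I freeze the first $s$ children of every vertex among the first $s$ vertices. I then thin only the part of the forest beyond the frozen part, so that the coloring induced by $\alpha_s$ becomes eventually constant. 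The frozen initial segments then converge to a subforest of the required shape. For each $\alpha_s$, only the finitely many children frozen before stage $s$ can be exceptions, so a suitable $r$ exists.

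\textbf{Step 3: condition (b).} For each $\alpha$, the data $c_\alpha(\omega^{m_1}+\dots+\omega^{m_k},\ell)$ form a pattern from a finite set. The vertices of a fixed tree and level are the children of vertices one level up. So, again working downward from the top vertices, I keep among the siblings infinitely many with the same pattern. This can be interleaved with Step 2's fusion without breaking it, since it only removes further non-frozen siblings.

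I expect the main obstacle to be the fusion in Step 2. Two things must be checked. First, the limit object still has infinitely many children at every retained vertex, which is what gives closedness and the $<^*$-preservation. Second, the later thinnings do not spoil the eventual constancy already achieved for earlier $\alpha$. The second point holds because thinning only shrinks the sets $F(\theta)^r_\ell$ and never adds elements to them. The first point is where the bookkeeping of which children are frozen at which stage must be done carefully.
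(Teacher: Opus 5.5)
The paper does not prove this statement. It quotes it from Mermelstein and uses it as a black box, so your outline has to stand on its own. The framework of retained subforests, the first bullet, Step 1, and the fusion in Step 2 are sound in outline. In particular, $r=s$ works for $\alpha_s$ uniformly in $\theta$, because at stage $s$ no vertex has more than $s$ frozen children.

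\textbf{The gap is Step 3 as you place it.} The pattern of $\alpha$, that is, the values $c_\alpha(\omega^{m_1}+\dots+\omega^{m_k},\ell)$, only exists once $\alpha$ has been processed. Your fusion, however, freezes ``the first $s$ children'' without looking at patterns, and a frozen vertex can never be discarded. Homogenizing only the non-frozen siblings therefore cannot make the frozen vertices of a given tree and level agree, and nothing stops a vertex with a minority pattern from being frozen. For example, on $\gamma=\omega\cdot 2+1$ let $\cc(\{0,\omega+j\})=1$ for all $j\geq 1$ and color every other pair red. If $\omega$ comes first in your enumeration, $0$ is frozen at stage $1$ as the first child of $\omega$, and nothing processed before then has any reason to delete it. Yet $c_0(\omega\cdot 2,0)=1$ while $c_i(\omega\cdot 2,0)=0$ for every $i\geq 1$, so (b) fails in the final forest.

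\textbf{The repair is not to interleave.} Run the fusion of Step 2 to completion with no pattern constraints, and only then carry out Step 3 with nothing frozen. This is legitimate because everything you have at that point survives passing to any retained subforest. The $L$-index of a retained child can only decrease, so $F(\theta)^r_\ell$ computed in the smaller forest is a nonempty subset of the one computed in the larger forest. Hence (a) persists with the same $r$ and the same colors $c_\alpha(\theta,\ell)$, and normality is inherited trivially. This is exactly your own observation that thinning only shrinks the $F$-sets.

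In particular, thinning tree $k$ does not change $c_\alpha(\omega^{m_1}+\dots+\omega^{m_k},\ell)$ for $\alpha$ outside tree $k$, so you can homogenize one tree at a time. Within a tree, note that (b) requires agreement across a whole level, not just among siblings, so ``keep infinitely many siblings with the same pattern, working downward'' is not enough. Instead, process the levels bottom-up and at each vertex keep infinitely many children of the same \emph{type}. The type of $v$ is its pattern together with the common type of its retained children; there are finitely many types because the height is finite. Afterwards all vertices on a level share a pattern, which is exactly (b). The same remark applies to the second half of Step 1, although your induction on the height already handles it if read that way.
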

We would like to remark that, as a result of Theorem \ref{maincanonicalcoloringtheorem}, in order to show that the closed partition relation $\gamma \rightarrow_{cl} (\alpha,\beta)^2$ holds, it is sufficient to verify that
\begin{center}
    for every \textbf{canonical} coloring $\cc: [\gamma]^2 \rightarrow \{0,1\}$ there exist a red homogeneous closed copy of $\alpha$ or a blue homogeneous closed copy of $\beta$.
\end{center}
This is because, given an arbitrary coloring $\cc: [\gamma]^2 \rightarrow \{0,1\}$, we can pass to a subset $I \subseteq \gamma$ as in the statement of Theorem \ref{maincanonicalcoloringtheorem} and we can pull back the homogeneous subsets with respect to $\cc_I$ via $\rho_I$.

\subsection{A directed graph approach}\label{directedgraphsection} In this subsection, by encoding the values of the maps $\tcc$ and $\hcc$, we shall attach a directed graph to each canonical coloring and explain how this directed graph can be used to deduce Ramsey theoretic information. We would like to note that this approach has already been used in \cite{Mermelstein19, Mermelstein20, KayaSaglam21} implicitly. Nevertheless, we would like to provide an explicit framework for this approach since our upper bound argument is more easily articulated in this framework.

Let $\cc:[\gamma]^2 \rightarrow \{0,1\}$ be a canonical coloring on the successor ordinal
$$\gamma=\omega^{m_1}+\omega^{m_2}+\dots+\omega^{m_n}+1$$ where $m_1 \geq m_2 \geq \dots \geq m_n$ are natural numbers. The canonical coloring $\cc$ induces a directed graph $\GG_{\cc}=(V,E)$ on the vertex set
\[V=\left\{V_i^j\right\}_{\substack{1 \leq i \leq n\\ 0 \leq j \leq m_i}} \text{ where } V_i^j=\{\alpha \in \gamma: CNF_{\gamma}(\alpha)=i \text{ and } CB(\alpha)=j\}\]
where the directed edge relation $E$ is defined as follows. For every $1 \leq k \neq i \leq n$ and $0 \leq j \leq m_i$ and $0 \leq \ell \leq m_k$,
\[E\left(V_i^j,V_k^{\ell}\right) \text{holds if and only if } \tcc(i,j;k,l)=1\]
and, for every  for $1 \leq i \leq n$ and $0 \leq k< j \leq m_i$,
\[E\left(V_i^j,V_i^k\right) \text{holds if and only if } \hcc(i,j,k)=1\]
In other words, $\GG_{\cc}=(V,E)$ is the directed graph on the set of levels of the trees in the forest representation of $\gamma$ in which the directed edges $\GG_{\cc}=(V,E)$ are placed in accordance with between which levels the maps $\hcc$ and $\tcc$ take the value {\color{blue} blue}.

We would like to remark that, while transitioning from the coloring itself to the associated directed graph, much information is lost. For example, colors of pairs of ordinals contained in the same tree but not related by $<^*$ are not reflected in the associated directed graph. However, at the cost of losing this local data, we are able to obtain a \textit{finite} combinatorial object that provides global data that is mostly sufficient to obtain reasonable lower and upper bounds. The only downside seems to be that the local data that is lost is presumably needed to be taken into account if one wishes to compute closed Ramsey numbers exactly.

In order to illustrate how these directed graphs can be used, suppose that there exist no red homogeneous closed copy of $\omega^2$ and no blue homogeneous (necessarily) closed copy of $3=\{0,1,2\}$ with respect to the canonical coloring $\cc$. Then the patterns in Figure \ref{forbiddenpatterns} are forbidden in the directed graph $\GG_{\cc}$.

\begin{figure}
    \centering
    \includegraphics[width=0.7\textwidth]{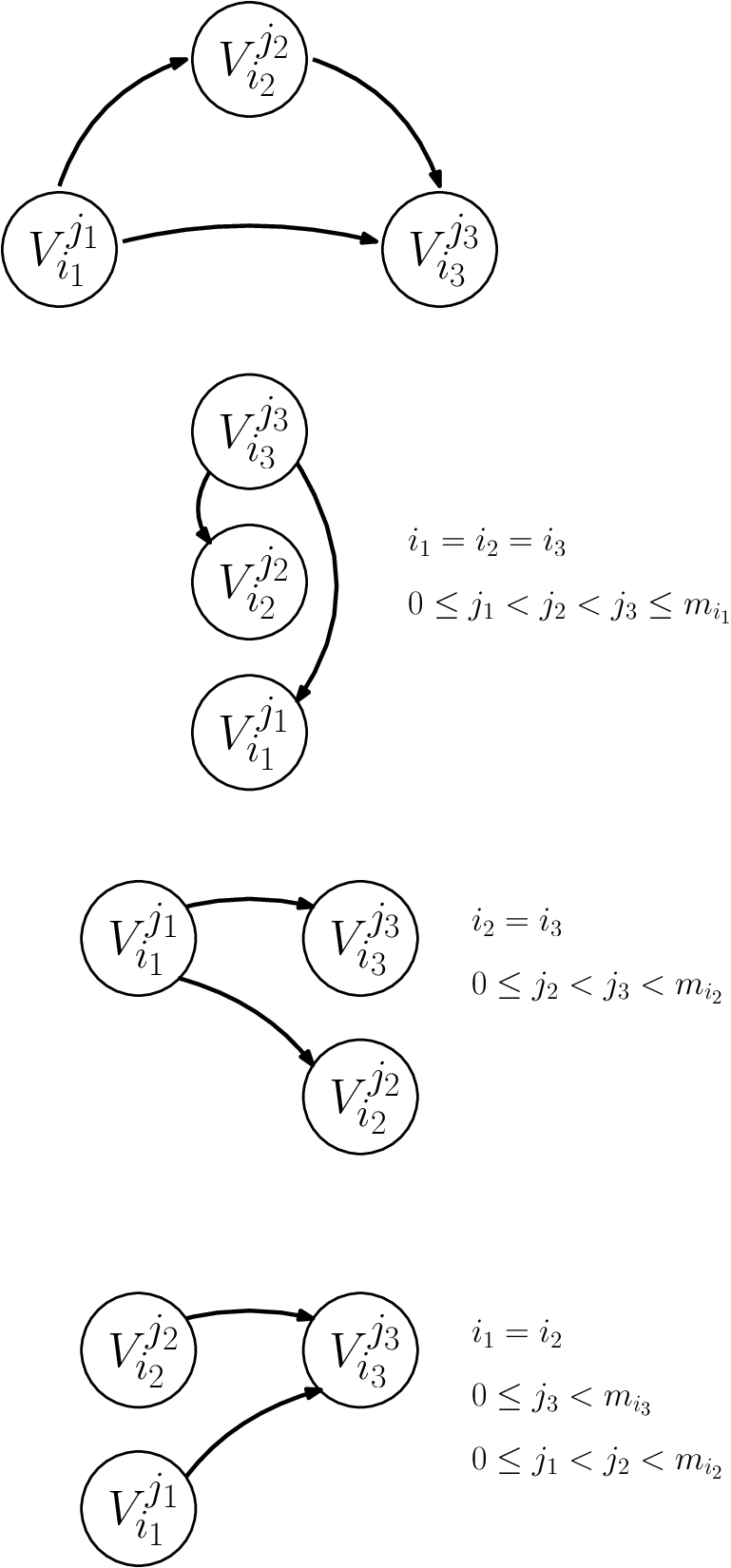}
    \caption{Four forbidden patterns in $\GG_{\cc}$ whenever there exist no red homogeneous closed copy of $\omega^2$ and no blue homogeneous copy of $3$ with respect to $\cc$.}
    \label{forbiddenpatterns}
\end{figure}

In order to see that the first pattern is forbidden, assume towards a contradiction that such $V_{i_1}^{j_1},V_{i_2}^{j_2}$ and $V_{i_3}^{j_3}$ exist in $\GG_{\cc}$. We are going to split into cases depending on which vertices are on the same tree of the forest representation of $\gamma$.

Suppose that $i_1 \neq i_2$, $i_1 \neq i_3$ and $i_2=i_3$. Let $\alpha \in V_{i_1}^{j_1}$. Since $\tcc(i_1,j_1; i_2,j_2)=1$ and $\tcc(i_1,j_1; i_3,j_3)=1$, there exist $r,r' \in \mathbb{N}$ such that
\begin{align*}
\cc(\{\alpha,\beta\})&=1\ \text{ for every }\beta \in F\left(\omega^{m_1}+\dots+\omega^{m_{i_2}}\right)^r_{{j_2}}\ \text{ and }\\
\cc(\{\alpha,\beta'\})&=1\ \text{ for every }\beta' \in F\left(\omega^{m_1}+\dots+\omega^{m_{i_3}}\right)^{r'}_{{j_3}}.
\end{align*}
We now set $\widehat{r}=\max\{r,r'\}+1$. Then, for any $\beta' \in F\left(\omega^{m_1}+\dots+\omega^{m_{i_3}}\right)^{\widehat{r}}_{{j_3}}$, we have
\[\cc(\{\alpha,\beta\})=\cc(\{\alpha,\beta'\})=\cc(\{\beta,\beta'\})=1\]
where $\beta$ is the unique ordinal in $F\left(\omega^{m_1}+\dots+\omega^{m_{i_2}}\right)^{\widehat{r}}_{{j_2}}$ with $\beta'<^*\beta$. This contradicts that there exists no blue homogeneous copy of $3$. The other cases are handled similarly using the definitions of $\tcc$ and $\hcc$.

The fact that the latter three patterns are forbidden is going to be central to our upper bound argument and we refer the reader to \cite[Lemma 4.3]{Mermelstein19} for a proof of this fact, of which the non-existence of these forbidden patterns is a restatement.

Before we conclude this section, we would like to mention that the non-existence of a blue homogeneous copy of $3$ does not prevent cyclic directed triangles of the form
\begin{center}
    \includegraphics[width=0.3\textwidth]{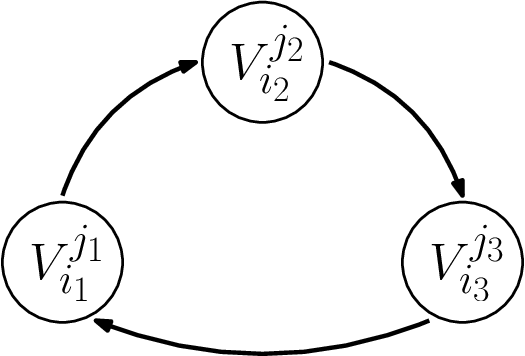}
\end{center}
from appearing in the directed graph $\GG_{\cc}$.

For example, consider the canonical coloring $\cc: [\omega \cdot 3+1]^2 \rightarrow \{0,1\}$ given by
\[\cc(\{\alpha,\beta\})=\begin{cases}
    1 &\text{ if } \alpha=j \text{ and }\beta=\omega +(j+1)+k \text{ for some } 0\leq j,k<\omega\\
    1 & \text{ if } \alpha=\omega +j \text{ and }\beta=\omega \cdot 2+(j+1)+k \text{ for some } 0\leq j,k<\omega\\
    1 & \text{ if } \alpha=\omega \cdot 2 +j \text{ and } \beta=(j+1)+k \text{ for some } 0\leq j,k<\omega\\
    0 & \text{ otherwise}
\end{cases}\]
It is not difficult to verify that $V_0^0$, $V_1^0$ and $V_2^0$ form such a cyclic directed triangle even though there exists no blue homogeneous set of size $3$.

\section{The upper bound}\label{sectionupperbound}

In this section, we shall prove that $R^{cl}(\omega \cdot n +1,3)<\omega^5$ for any integer $n \geq 2$ by showing that the closed partition relation
\[\gamma=\omega^4 \cdot M(n) +1 \rightarrow_{cl} (\omega \cdot n+1,3)^2\]
holds for a suitable choice of $M(n)$. In order to be able to define $M(n)$, we shall need various finite Ramsey theoretic notions, some of which have been incidentally used in \cite[Section 6]{CaicedoHilton17} for its upper bound argument as well.

\subsection{Coloring complete directed graphs} Let $n \geq 2$ be an integer. It turns out that there exists a positive integer $m$ such that for any edge coloring of the complete directed graph $K_m^*$, there exist a red homogeneous complete directed induced subgraph $K_n^*$ or a blue homogeneous copy of a transitive tournament on $3$ vertices, that is,
\begin{center}
    \includegraphics[width=0.25\textwidth]{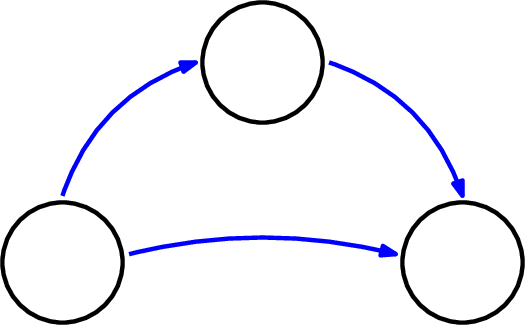}
\end{center}
The least such positive integer $m$ is denoted by $R(K_n^*,L_3)$. It is noted by Caicedo and Hilton in \cite[Section 6]{CaicedoHilton17} that the existence of the Ramsey number $R(K_n^*,L_3)$ was proven by Erd\H os and Rado in \cite{ErdosRado56,ErdosRado67}, although, in a slightly different form. As for how large these Ramsey number can be, Larson and Mitchell showed in \cite{LarsonMitchell97} that $R(K_n^*,L_3) \leq n^2$.

For our main argument, besides these directed graph Ramsey numbers, we shall need the following Ramsey-type result.

\begin{proposition}\label{avoidancenumberprop} Let $k \geq 3$ be an integer. Then there exists an integer $m \geq 1$ such that for every edge coloring of the directed complete graph $K_m^*$ with $3$ colors, there exists a subset $S \subseteq K_m^*$ of size $k$ such that
\begin{center}
    For every $x \in S$, the edges from vertices in $S$ to $x$ avoid at least one color.
\end{center}
\end{proposition}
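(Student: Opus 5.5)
We will deduce the statement from the finite Ramsey theorem for $3$-uniform hypergraphs (equivalently, for ordered triples), using a bounded number of colors. Fix a $3$-edge-coloring $\chi$ of $K_m^*$, and identify the vertex set with $\{1,\dots,m\}$, so that every ordered pair $(u,v)$ with $u\neq v$ carries a color $\chi(u,v)\in\{0,1,2\}$. To each triple $a<b<c$ we assign the ``pattern''
\[ P(a,b,c)=\bigl(\chi(a,b),\chi(b,a),\chi(a,c),\chi(c,a),\chi(b,c),\chi(c,b)\bigr)\in\{0,1,2\}^6 .\]
This is a coloring of $[m]^3$ with $3^6$ colors. We take $m$ to be the hypergraph Ramsey number $R_3(N;3^6)$ for a suitable $N$ depending on $k$; we expect $N=k+2$ to suffice. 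We then obtain a set $H$ of size $N$ on which $P$ is constant.

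The next step is to show that $H$ is homogeneous in the sense that matters here. Constancy of $P$ on $[H]^3$ forces $\chi(u,v)$ to depend only on whether $u<v$ or $u>v$, provided $|H|\ge 4$. For instance, the entry $\chi(a,b)$ for $a<b$ in $H$ coincides with the entry $\chi(a,c)$ of the triple $a<b<c$, and it is also the first coordinate of $P(a,b,c')$ for any $c'$. Comparing triples that share pairs in different positions, using a fourth element of $H$ either below or above as needed, shows that there are colors $p,q\in\{0,1,2\}$ with $\chi(u,v)=p$ whenever $u<v$ and $\chi(u,v)=q$ whenever $u>v$, for all $u,v\in H$. This bookkeeping is the main step to check carefully. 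The role of the extra elements in $N=k+2$ is to supply the witnesses needed when a pair sits at an extreme position of $H$.

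Once $H$ is found, any $S\subseteq H$ of size $k$ works. For $x\in S$, the edges from vertices of $S$ into $x$ carry only the colors in $\{p,q\}$, which is a set of at most $2$ colors, so at least one of the $3$ colors is avoided.

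A simpler route avoids hypergraphs entirely. Color each unordered pair $\{u,v\}$ with $u<v$ by the ordered pair $(\chi(u,v),\chi(v,u))\in\{0,1,2\}^2$, and apply the ordinary multicolor Ramsey theorem with $9$ colors to obtain a homogeneous set $S$ of size $k$. Then every edge inside $S$ directed forward has color $p$ and every edge directed backward has color $q$, so again at most two colors appear on edges into any $x\in S$. This is the argument we would actually write down, taking $m=R(k;9)$.
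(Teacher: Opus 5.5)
Your final $9$-color argument is correct and is essentially the paper's proof: both reduce the directed $3$-coloring to an undirected coloring of pairs and apply the classical multicolor Ramsey theorem, so the hypergraph detour is unnecessary. The only difference is the encoding. The paper colors $\{a,b\}$ by $\min(\{0,1,2\}\setminus\{c(a,b),c(b,a)\})$, which uses just $3$ colors and gives the sharper bound $m=R(k,k,k)$ (recorded afterwards as $A_3(k)\le R(k,k,k)$). Your encoding by the ordered pair of colors needs the $9$-color Ramsey number, but yields the more rigid conclusion that forward edges and backward edges are each monochromatic.
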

\begin{proof} Set $m=R(k,k,k)$ where $R(k,k,k)$ denotes the classical Ramsey number. Given an edge coloring $c: E(K_m^*) \rightarrow \{0,1,2\}$ where the edge set is
\[E(K_m^*)=\{(a,b): a,b \in K_m^* \text{ and } a \neq b\}\]
consider the edge coloring $c': E(K_m) \rightarrow \{0,1,2\}$ of the undirected complete graph $K_m$ given by
\[c'\big(\{a,b\}\big)=\min \left(\{0,1,2\} \setminus \{c(a,b),c(b,a)\}\right)\]
By the definition of $R(k,k,k)$, there exists a subset $S \subseteq K_m$ of size $k$ such that the undirected edges between the vertices of $S$ are monochromatic, say, of color $i$. But then, by the construction of $c'$, for every $x \in S$, the edges from vertices in $S$ to $x$ in the directed graph $K_m^*$ avoid the color $i$.
\end{proof}
From now on, we shall denote the smallest such positive integer by $A_{3}(k)$. As seen in the proof of Proposition \ref{avoidancenumberprop}, we have $A_3(k) \leq R(k,k,k)$. It is also not difficult to check that $A_3(k) \geq 3^{k-2}/2^{k-1}$. While it is likely that this concept or an equivalent concept may have been already studied in the existing literature, we were unable to determine if this is indeed the case.

\subsection{A positive closed partition relation}

Let $n \geq 2$ be an integer. Set $M(n)=R(A_{3}(R(K_n^*,L_3)),3)$ and consider the ordinal
\[\gamma=\omega^4 \cdot M(n)+1\]
the forest representation of which consists of $M(n)$ trees of $5$ levels, each of which forms a closed copy of $\omega^4+1$. The main theorem of this section is the following.
\begin{theorem}\label{mainuppertheorem} $\gamma \rightarrow_{cl} (\omega \cdot n+1,3)^2$.   
\end{theorem}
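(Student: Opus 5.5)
By Theorem \ref{maincanonicalcoloringtheorem}, it suffices to treat a canonical coloring $\cc:[\gamma]^2\to\{0,1\}$. Assume that $\cc$ has no blue homogeneous set of size $3$; we will then produce a red homogeneous closed copy of $\omega\cdot n+1$. We also assume there is no red homogeneous closed copy of $\omega^2$, since otherwise we are done immediately: $\omega\cdot n+1$ embeds as a closed copy inside $\omega^2$. So all four patterns of Figure \ref{forbiddenpatterns} are absent from $\GG_{\cc}$. The trees are indexed by $1\le i\le M(n)$, and each tree has levels $V_i^0,\dots,V_i^4$.

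The plan is to compress the data in three stages, matching the three nested Ramsey numbers in $M(n)=R(A_3(R(K_n^*,L_3)),3)$.

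\textbf{Stage 1.} Colour the pair of trees $\{i,k\}$ by whether some blue edge of $\GG_{\cc}$ joins levels of tree $i$ and tree $k$ in a way that, combined with a third tree, would force a blue triangle. The first forbidden pattern and its variants show that three trees pairwise joined by such edges yield a blue homogeneous $3$. Hence the outer $R(\cdot,3)$ gives $A_3(R(K_n^*,L_3))$ trees that are pairwise "red" in this coarse sense. The precise form of this coarse colouring is part of the main obstacle discussed below.

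\textbf{Stage 2.} On the surviving trees, define a $3$-colouring of ordered pairs $(i,k)$ that records which of a few possible configurations of blue edges from the levels of tree $i$ into the levels of tree $k$ occurs. The forbidden patterns should cut the possibilities down to three types. Proposition \ref{avoidancenumberprop} then yields $R(K_n^*,L_3)$ trees such that, for each tree $x$, all incoming edges avoid a fixed type. Concretely, this should mean that in each tree $x$ there is a level $V_x^{j(x)}$ with $j(x)\ge1$, together with the level $V_x^{j(x)-1}$ below it (or some suitable pair of levels), such that no blue edge enters these levels from the other selected trees.

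\textbf{Stage 3.} Colour the complete directed graph on the remaining trees blue on $(i,k)$ if $\tcc(i,j(i);k,j(k)-1)=1$ or something similar, and red otherwise. By $R(K_n^*,L_3)$, either there is a blue transitive triangle $L_3$, or there are $n$ trees whose chosen levels are pairwise red in both directions. A blue $L_3$ should be converted into a blue homogeneous $3$ by the argument given for the first forbidden pattern, using the large sets $F(\cdot)^r_\ell$. Transitivity is what lets us pick the points in the correct order, which is exactly why $L_3$ rather than a cyclic triangle is the right target.

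In the red $K_n^*$ case, we build the copy of $\omega\cdot n+1$ greedily from left to right. In the $t$-th chosen tree, we pick an $\omega$-sequence from $F(\cdot)^r_{j-1}$ converging to a point of level $j$. The red pair-colour inside a tree comes from $\hcc$ being red between those levels, and the avoidance from Stage 2 should guarantee this. At each step we choose $r$ larger than all finitely many thresholds given by canonical condition (a) for the previously chosen points, so that all cross pairs are red. The last tree's limit point serves as the top point $\omega\cdot n$.

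\textbf{Main obstacle.} The delicate step is designing the colourings in Stages 1 and 2 so that the four forbidden patterns force exactly three types. This is what allows $A_3$ to guarantee, in every tree, a pair of consecutive levels that are red internally and receive no blue in-edges. Five levels ($\omega^4$) are presumably needed precisely so that, among the levels $0$ to $4$ of a tree, the forbidden patterns leave a usable red pair $V^{j}\!,V^{j-1}$ for each avoided colour. I would first tabulate, for a single pair of trees, which blue edge sets between levels $0$ to $4$ are consistent with the forbidden patterns, and read off the three types from that tabulation.
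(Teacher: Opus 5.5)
There is a genuine gap, and it sits exactly where the construction has to control infinitely many points at once.

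\textbf{The greedy step.} You build the copy of $\omega\cdot n+1$ tree by tree and say that each new point only has to respect ``finitely many thresholds'' from canonical condition (a). After the first tree, however, the previously chosen points include a whole $\omega$-sequence, and their thresholds $r_\alpha$ are unbounded, so no single $r$ works. This is fatal for your limit points. A point $\lambda_y$ of a non-top level $V_y^{j(y)}$ is one fixed ordinal, and canonicity controls $\cc(\{\alpha,\lambda_y\})$ only when $\lambda_y\in F(\omega^4\cdot y)^{r_\alpha}_{j(y)}$. That cannot be arranged for infinitely many $\alpha$ with unbounded $r_\alpha$.

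\textbf{Intra-tree pairs.} Siblings in one $\omega$-sequence are not $<^*$-related, so $\hcc$, and indeed $\GG_{\cc}$ as a whole, says nothing about their colour. Stage 2 as you describe it concerns only cross-tree edges, so it does not give $\hcc(x,j(x),j(x)-1)=0$ either.

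\textbf{The Stage 3 dichotomy.} As written, $(i,k)$ is blue iff $\tcc(i,j(i);k,j(k)-1)=1$. In a transitive triangle $i\to k\to m$, the point supplied in tree $k$ lies at level $j(k)-1$, while the edge out of $k$ starts at level $j(k)$. These do not compose, so a blue $L_3$ does not yield a blue triangle.

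\textbf{Stages 1 and 2.} These are never actually defined. The heuristic that blue edges among three trees force a blue triangle is false in general; see the cyclic example at the end of Section \ref{directedgraphsection}.

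\textbf{The missing idea.} Take the limit points to be the top points $\omega^4\cdot i$. Because the top level is a singleton, $F(\omega^4\cdot i)^r_4=\{\omega^4\cdot i\}$ for every $r$. Hence the colour between a top point and any level is fully determined by $\GG_{\cc}$, in both directions, with no threshold issues. The paper's argument then runs as follows.
\begin{itemize}
\item Stage 1 is the ordinary $R(\cdot,3)$ applied to the tops themselves, which are actual points. This gives pairwise red tops.
\item In Stage 2, Pattern 2 leaves in each tree three levels $k_0<k_1<k_2<4$ not hit by that tree's own top. Pattern 3 makes the induced $3$-colouring of ordered pairs of trees well defined. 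Proposition \ref{avoidancenumberprop} then gives, for each surviving tree $i$, a bottom level $V_i^{k_{c_i}}$ receiving no edge from any selected top. So every top--bottom pair is red, within a tree and across trees. This is what the four non-top levels are for: they supply the three colours in the avoidance argument.
\item The infinitely many pairs among bottom points, within and across trees, are not handled through $\tcc$ at all. One takes cofinal sequences in the bottom levels; their union $S$ has order type $\omega\cdot R(K_n^*,L_3)$. Applying the classical relation $R(\omega\cdot n,3)=\omega\cdot R(K_n^*,L_3)$ to the actual colouring on $S$ gives a red $\widehat S$ of type $\omega\cdot n$. Adding the corresponding tops produces the red closed copy of $\omega\cdot n+1$.
\end{itemize}
So $R(K_n^*,L_3)$ enters through the ordinal Ramsey number on actual points, not through a directed graph on chosen levels.
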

\begin{proof} By the remarks at the end of Section \ref{specialtypescoloring}, it is sufficient to show that for every canonical coloring $\cc: [\gamma]^2 \rightarrow \{0,1\}$ there exist a red homogeneous closed copy of $\omega \cdot n +1$ or a blue homogeneous copy of $3$.

Let $\cc: [\gamma]^2 \rightarrow \{0,1\}$ be a canonical coloring. Assume towards a contradiction that there exist no such monochromatic copies with respect to $\cc$. Then, by the remarks in Section \ref{directedgraphsection}, the associated directed graph $\GG_{\cc}$ does not contain the forbidden patterns in Figure \ref{forbiddenpatterns} as an induced subgraph.

As a first step, we will find a set of indices $I_0$ for which the \textit{top vertices} form an independent set. Consider the restriction of the coloring $\cc$ to the set of top vertices
\[\left\{V_i^{4}\right\}_{i=1}^{M(n)}=\big\{\{\omega^4\},\{\omega^4 \cdot 2\},\dots,\{\omega^4 \cdot M(n)\}\big\}\]
Since there exists no blue homogeneous triangle with respect to $\cc$, by the definition of the Ramsey number $R(\cdot,3)$, there exists $I_0 \subseteq \{1,\dots,M(n)\}$ is of size $A_{3}(R(K_n^*,L_3))$ such that the set $\left\{V_{i}^4: i \in I_0\right\}$ is independent in $\GG_{\cc}$.

In the second step, we shall find a set of indices $I_1 \subseteq I_0$ and choose a non-top vertex from each corresponding column so that these top vertices are not connected to any chosen vertex. For each $i \in I_0$, choose three vertices $V_i^{k_0},V_i^{k_1},V_i^{k_2}$ to which there is no edge from $V_i^4$, where $0 \leq k_0<k_1<k_2<4$. Observe that this can be done by the non-existence of Forbidden Pattern 2. Now, consider the edge coloring $c:E(K_{I_0}^*) \rightarrow \{0,1,2\}$ of the directed complete graph $K_{I_0}^*$ given by
\[c\big((i',i)\big)=\begin{cases}
    j & \text{ if there exists an edge from } V_{i'}^4 \text{ to } V_{i}^{k_j} \text{ in } \GG_{\cc}\\
    0 & \text{ otherwise }
\end{cases}
\]
The map $c$ is well-defined because of the non-existence of Forbidden Pattern 3. Since $|I_0| = A_{3}(R(K_n^*,L_3))$, it follows from Proposition \ref{avoidancenumberprop} that there exists $I_1 \subseteq I_0$ of size $R(K_n^*,L_3)$ such that
\begin{center}
    for every $i \in I_1$, in the edge colored directed graph $K_{I_0}^*$, the edges from vertices in $I_1$ to the vertex $i$ avoids at least one color, say, $c_i \in \{0,1,2\}$.
\end{center}
Reinterpreting this in the original directed graph $\GG_{\cc}$, we have that, for every distinct $i,i' \in I_1$, there exists no edge from $V_{i'}^4$ to $V_i^{k_{c_i}}$. From now on, we shall refer to the vertex $V_i^{k_{c_i}}$ as the $i^{\text{th}}$ \textit{bottom} vertex.

So far, we have extracted a set of top and bottom vertices
\[\left\{V_i^4,V_{i}^{k_{c_i}}\right\}_{i \in I_1}\]
such that there is no edge from a top vertex $V_i^4$ to any other vertex. At this point, we would like to note the following important observation: For any $\alpha \in V_{i'}^4$ and $\beta \in V_{i}^{k_{c_i}}$ with $i \neq i' \in I_1$, we necessarily have $\cc(\{\alpha,\beta\})=0$ because the non-existence of an edge from $V_{i'}^4$ to $V_{i}^{k_{c_i}}$ implies the non-existence of an edge from $V_{i}^{k_{c_i}}$ to $V_{i'}^4$ as $\cc$ is canonical and the only large subset of $V_{i'}^4$ is the singleton $\{\alpha\}$. It is also clear that we have $\cc(\{\alpha,\beta\})=0$ whenever $\alpha \in V_{i}^4$ and $\beta \in V_{i}^{k_{c_i}}$ because there is no edge from $V_i^4$ to $V_{i}^{k_{c_i}}$ by construction.

In the third and final step, we shall extract a copy of $\omega \cdot n +1$ using the corresponding bottom vertices. For each $i \in I_1$, choose an increasing sequence $(\alpha^i_{n})_{n \in \mathbb{N}}$ of ordinals contained in $V_i^{k_{c_i}}$ which is cofinal in $V_i^{k_{c_i}}$ so that their supremum is the unique ordinal in $V_i^4$. Clearly the set
\[S=\bigcup_{i \in I_1}\{\alpha^i_{n}\}_{n \in \mathbb{N}}\]
is order-isomorphic to $\omega \cdot R(K_n^*,L_3)$. In the case of classical ordinal Ramsey numbers, it is known that $R(\omega \cdot n,3)=\omega \cdot R(K_n^*,L_3)$, see \cite[Theorem 6.4]{CaicedoHilton17}. It now follows from the non-existence of a blue triangle respect to $\cc$ that there is a red homogeneous subset $\widehat{S} \subseteq S$ of order type $\omega \cdot n=\{\omega \cdot \ell+k: 0 \leq \ell <n,\ k \in \omega\}$.

By discarding finitely many elements of $\widehat{S}$ if necessary, we may assume without loss of generality that, for each $0 \leq \ell <n$, the $\ell^{\text{th}}$ copy $\{\omega \cdot \ell+k: \ k \in \omega\}$ of $\omega$ in the set $\widehat{S}$ is completely contained in a bottom vertex, say, $V_{i_{\ell}}^{j_{\ell}}$. Moreover, since the initial sequences were chosen to be cofinal in the corresponding vertices, the bottom vertices $V_{i_{\ell}}^{j_{\ell}}$ and $V_{i_{\ell'}}^{j_{\ell'}}$ are distinct for every $0 \leq \ell \neq \ell' <n$. It is readily verified that the set
\[\bigcup_{\ell=0}^{n-1} V_{i_{\ell}}^{j_{\ell}} \cup V_{i_{\ell}}^4\]
is a red homogeneous copy of $\omega \cdot n +1$, which is a contradiction.
\end{proof}

\section{The lower bound}\label{sectionlowerbound}
In this section, by explicitly constructing colorings that witness various negative closed partition relations, we shall prove the lower bounds in Theorem \ref{maintheorem}.

\subsection{From graphs to canonical colorings}\label{graphstocoloringsection} Recall from Section \ref{directedgraphsection} that each canonical coloring induces a directed graph on a partition of the underlying ordinal. It is only natural to try to reverse this process in order to obtain a canonical coloring from a directed graph on the appropriate partition of the underlying ordinal. It turns out that this can indeed be done, however, the natural context seems to be that of undirected graphs.

Let $\gamma$ be an ordinal and $\GG=(V,E)$ be an undirected graph on a partition $V$ of the ordinal $\gamma$. The graph $\GG$ induces a pair coloring $\cc_{\GG}:[\gamma]^2 \rightarrow \{0,1\}$ given by
\begin{center}
    $\cc_{\GG}(\{\alpha,\beta\})=1$ if and only if the vertices containing $\alpha$ and $\beta$ are adjacent in $\GG$.
\end{center}
It is not difficult to see that the coloring $\cc_{\GG}$ is canonical whenever the partition $V$ is chosen as in Section \ref{directedgraphsection}. From now on, such partitions will be called \textit{canonical partitions}. More specifically, given an ordinal
\[\gamma=\omega^{\alpha_1}+\dots+\omega^{\alpha_n}\]
where $\alpha_1 \geq \dots \geq \alpha_n$ are ordinals, the \textit{canonical partition of $\gamma$} is the partition
\[V=\left\{V_i^{\beta}\right\}_{\substack{1 \leq i < n\\ 0 \leq \beta \leq \alpha_i}} \cup \left\{V_n^{\beta}\right\}_{0 \leq \beta<\alpha_n}  \text{ where } V_i^{\beta}=\{\alpha \in \gamma: CNF_{\gamma}(\alpha)=i \text{ and } CB(\alpha)=\beta\}\]
It turns out that, by constructing graphs $\GG$ on the canonical partition of an ordinal with special properties, one may avoid various prescribed monochromatic homogeneous sets with respect to the coloring $\cc_{\GG}$. Indeed, this has been exactly the approach that is taken in \cite{KayaSaglam21} and \cite[Lemma 5.3]{CaicedoHilton17} to prove negative closed partition relations. Another example result that uses this approach is Theorem \ref{toppartmaintheorem}, which will be proven in Section \ref{sectiontoppart}.

Having illustrated how graphs on canonical partitions of ordinals can be used to find colorings witnessing various negative partition relations, we would like to remark that this approach has limited power because, when we do not put an edge between two vertices, we are coloring \textit{all} pairs of ordinals contained in these vertices to the color red for no reason at all, whereas, coloring \textit{some} of these pairs to blue without creating blue triangles could potentially prevent some red homogeneous copies of ordinals from existing. For example, in the coloring that witnesses negative partition relations in \cite{Mermelstein19}, there are such blue pairs.

In order to remedy this situation, we shall first partition the elements of a canonical partition into further pieces, some of which will be partitioned into further pieces, some of which will be further partitioned and so on. Then we shall induce a coloring using graph-like connections between these pieces. Due to the recursive nature of this process resembling Russian stacking dolls, we shall call such coloring matryoshka colorings.

\subsection{Matryoshka colorings} In this subsection, we shall define the notion of a matryoshka coloring of an ordinal $\gamma<\omega^\omega$. In order to define a matryoshka coloring of $\gamma$, we shall first create what we call matryoshka boxes of $\omega^{\omega}$.

Let $0 \leq j < k$. A \textit{matryoshka box of degree $k$ and rank $j$} is a set $M \subseteq \omega^{\omega}$ such that
\[M=\left\{\alpha \in \omega^{\omega}:\ CB(\alpha)=j \text{ and } \alpha\leq^*\delta\right\}\]
for some $\delta<\omega^{\omega}$ with $CB(\delta)=k-1$. Consider the set
\[\mathcal{M}_k=\{M \subseteq \omega^{\omega}: M \text{ is a matryoshka box of degree }k\}\]
Observe that two distinct matryoshka boxes of the same degree are disjoint. It is straightforward to verify that the relation $\prec$ on the set $\mathcal{M}_k$ given by
\begin{center}
    $M \prec N$ if and only if\\ $\text{rank}(M)<\text{rank}(N)$, or, $\text{rank}(M)=\text{rank}(N)$ and $\sup(M) < \inf(N)$
\end{center}
is a strict well-order relation and that the order type of $(\mathcal{M}_k,\prec)$ is $\omega^{\omega} \cdot k$. From now on, we shall write $M_k(\theta,j)$ to denote the box $\varphi_k(\theta,j)$ where $\varphi_k: \omega^{\omega} \times k \rightarrow (\mathcal{M}_k,\prec)$ is the unique order-isomorphism.

Before we proceed, we would like to note the basic but important observation that every matryoshka box of degree $k+1$ consists of precisely $\omega$-many matryoshka boxes of degree $k$ stacked back to back. We refer the reader to Figure \ref{matryoshkaboxes} for an illustration of matryoshka boxes of degree $2$ up to $5$ inside the ordinal $\omega^4+1$.

We are now ready to define a matryoshka coloring. Let $\cc: [\gamma]^2 \rightarrow \{0,1\}$ be a coloring. Given integers $0 \leq j_2 <j_1<k \leq n$, the coloring $\cc$ is said to have
\begin{itemize}
    \item \textit{forward (respectively, backward) connections of degree $k$ from rank $j_1$ to $j_2$} in the case
\begin{center}
$\cc(\{\alpha,\beta\})=1$ for all $\alpha \in M_k(\theta_1,j_1)$ and $\beta \in M_k(\theta_2,j_2)$\\
whenever $\theta_1<\theta_2$ (respectively, $\theta_1>\theta_2$) and $M_k(\theta_1,j_2)$ and $M_k(\theta_2,j_2)$ are matryoshka boxes of degree $k$ and rank $j_2$ that are contained in the same matryoshka box of degree $k+1$.\\
\end{center}
\item \textit{downward connections of degree $k$ from rank $j_1$ to $j_2$} in the case
\begin{center}
    $\cc(\{\alpha,\beta\})=1$ for all $\alpha \in M_k(\theta,j_1)$ and $\beta \in M_k(\theta,j_2)$\\
    whenever $M_k(\theta,j_1)$ and $M_k(\theta,j_2)$ are matryoshka boxes of degree $k$.\\
\end{center}
\end{itemize}
We shall denote the coloring $\cc$ having forward, backward and downward connections of degree $k$ from rank $j_1$ to $j_2$ by
\[ \fconnect{j_1}{k}{j_2},\ \ \bconnect{j_1}{k}{j_2}\ \ \text{and}\ \ \dconnect{j_1}{k}{j_2}\] respectively. The coloring $\cc$ is said to be \textit{matryoshka} if $\cc$ can be expressed using finitely many forward, backward or downward connections. An illustration of a matryoshka coloring of $\omega^4+1$ expressed by the connections
\[\dconnect{4}{5}{3}\ \ \ \dconnect{4}{4}{3}\ \ \ \dconnect{3}{4}{2}\ \ \ \bconnect{1}{3}{0}\ \ \ \fconnect{1}{2}{0}\]
is given in Figure \ref{matryoshkaboxes} so that the reader may build a visual understanding of matryoshka boxes as well as forward, backward and downward connections.

\begin{figure}[h!]
    \centering    \includegraphics[width=0.95\textwidth]{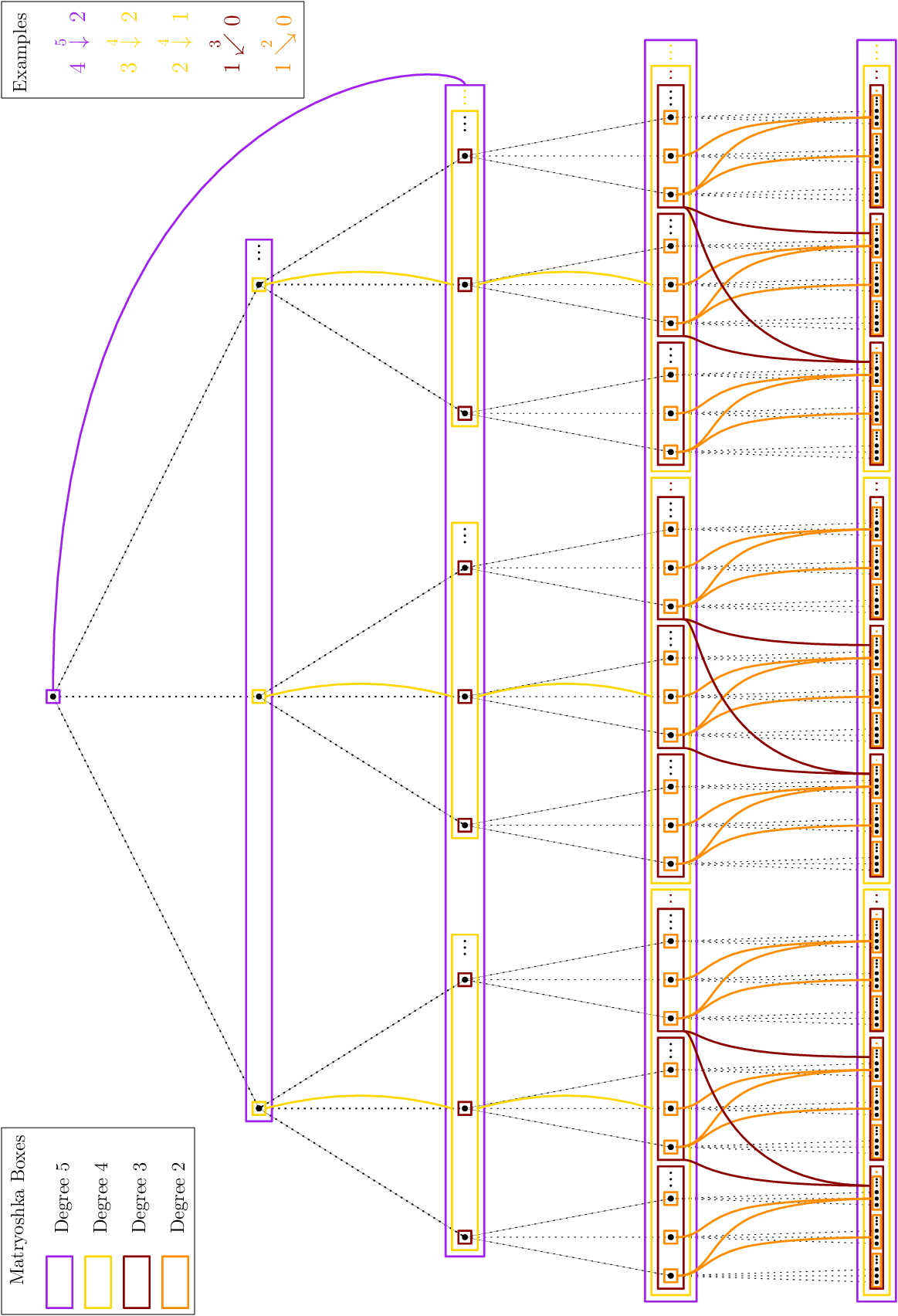}
    \caption{An illustration of matryoshka boxes of degrees $2$ up to $5$ contained in the ordinal $\omega^4+1$, together with some example forward, backward and downward connections}
    \label{matryoshkaboxes}
\end{figure}

\begin{figure}[h!]
    \centering    \includegraphics[width=0.95\textwidth]{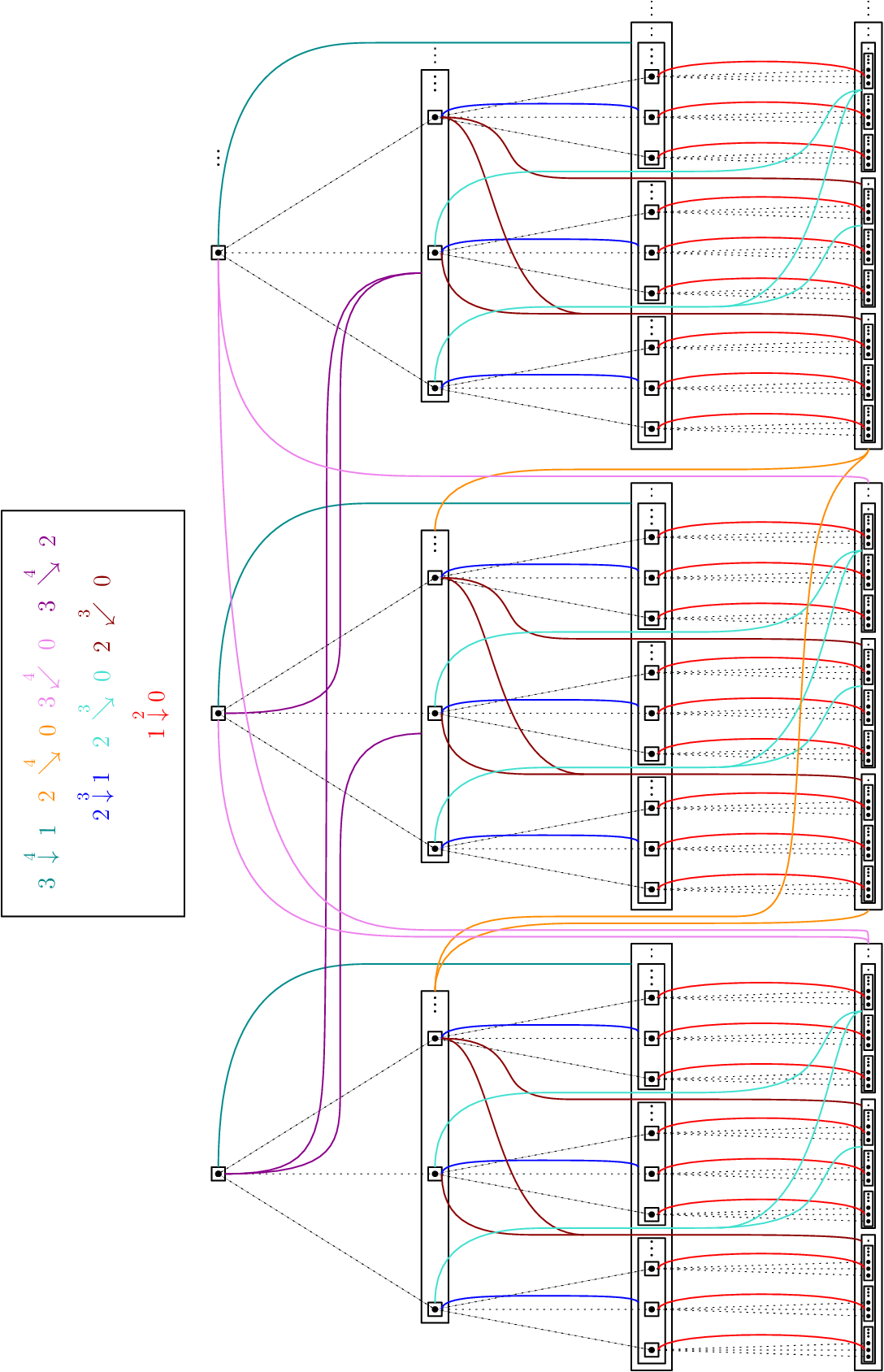}
    \caption{An illustration of the matryoshka coloring $\cc_0$.}
    \label{coloringofw4}
\end{figure}

\subsection{A negative closed partition relation}\label{omegafournegativesection}

In this section, we shall prove the negative closed partition relation $\omega^4 \nrightarrow_{cl} (\omega \cdot 2+1,3)^2$. Consider the matryoshka coloring $\cc_0: \left[\omega^4\right]^2 \rightarrow \{0,1\}$ given by the connections
\[ \dconnect{3}{4}{1}\ \ \ \ \fconnect{2}{4}{0}\ \ \ \  \bconnect{3}{4}{0}\ \ \ \ \fconnect{3}{4}{2}\]
\[\dconnect{2}{3}{1}\ \ \ \ \fconnect{2}{3}{0}\ \ \ \ \bconnect{2}{3}{0}\ \ \ \ \dconnect{1}{2}{0}\]
An illustration of the matryoshka coloring of $\cc_0$ is given in Figure \ref{coloringofw4}. The main proposition of this subsection is the following.
\begin{proposition}\label{omegatofourmatrycoloring} There exist no blue homogeneous copy of $3$ and no red homogeneous closed copy of $\omega \cdot 2+1$ with respect to $\cc_0$.    
\end{proposition}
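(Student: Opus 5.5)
The plan is to verify the two claims separately, working directly from the list of eight connections defining $\cc_0$. Every point of $\omega^4$ has Cantor--Bendixson rank in $\{0,1,2,3\}$. I will use the following dictionary for the boxes:
\begin{itemize}
\item a matryoshka box of degree $4$ is one rank-level of one tree, i.e.\ of one copy of $\omega^3$;
\item a box of degree $3$ is one rank-level inside one $\omega^2$-block;
\item a box of degree $2$ lies inside one $\omega$-block.
\end{itemize}
With this dictionary, the blue pairs of $\cc_0$ are exactly the following.
\begin{itemize}
\item Rank $3$ with rank $1$ in the same tree.
\item Rank $3$ with rank $2$ in a later tree.
\item Rank $3$ with rank $0$ in an earlier tree.
\item Rank $2$ with rank $0$ in a later tree.
\item Rank $2$ with rank $1$ in the same $\omega^2$-block.
\item Rank $2$ with rank $0$ in a different $\omega^2$-block of the same tree.
\item Rank $1$ with rank $0$ in the same $\omega$-block.
\end{itemize}
In particular, two points of the same rank are never blue.

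\emph{No blue triangle.} A blue triangle must use three distinct ranks, so I split into the four rank patterns.
\begin{itemize}
\item Ranks $\{3,2,1\}$: the $3$--$1$ edge puts the rank-$1$ point in the tree of the rank-$3$ point. The $2$--$1$ edge puts it in the tree of the rank-$2$ point. But the $3$--$2$ edge needs these two trees to differ.
\item Ranks $\{3,2,0\}$: write $t(\cdot)$ for the tree of a point. The edges force $t(0)<t(3)<t(2)$. Yet the $2$--$0$ edge needs $t(0)>t(2)$ or $t(0)=t(2)$.
\item Ranks $\{3,1,0\}$: the $3$--$1$ and $1$--$0$ edges place all three points in one tree. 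This contradicts the $3$--$0$ edge, which needs an earlier tree.
\item Ranks $\{2,1,0\}$: the rank-$0$ point is forced into the $\omega^2$-block of the rank-$2$ point, while the $2$--$0$ edge forbids that.
\end{itemize}

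\emph{No red closed copy of $\omega\cdot 2+1$.} Suppose $X=\{x_n\}\cup\{a\}\cup\{y_n\}\cup\{b\}$ is such a copy, where $x_n\to a<y_0$ and $y_n\to b$. I will use the following convergence fact. If $z_n\to z$ with $CB(z)=r$, then eventually the $z_n$ have rank $<r$ and lie in the same tree, $\omega^2$-block or $\omega$-block as $z$, whichever box at the appropriate level contains $z$. Moreover, if all the $z_n$ have rank $r-1$ or lower, they must eventually spread over infinitely many sub-blocks of that box.
\begin{itemize}
\item First step: $CB(a),CB(b)\ge 2$. Rank $1$ is impossible, since the approximating rank-$0$ points lie in the same $\omega$-block, giving a blue pair by $\dconnect{1}{2}{0}$.
\item A limit of rank $2$ is approached only by rank-$0$ points of its own $\omega^2$-block. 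Rank-$1$ points there are blue by $\dconnect{2}{3}{1}$.
\item A limit of rank $3$ is approached eventually by points of rank $0$ or $2$ in its own tree. Rank-$1$ points are blue by $\dconnect{3}{4}{1}$.
\end{itemize}

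Then I run through the four cases for $(CB(a),CB(b))\in\{2,3\}^2$.
\begin{itemize}
\item $CB(b)=2$: the $y_n$ are rank $0$ in $b$'s $\omega^2$-block.
 \begin{itemize}
 \item If $CB(a)=2$, then $a$ sits in an earlier $\omega^2$-block. It is blue with the $y_n$ via $\fconnect{2}{3}{0}$ or $\fconnect{2}{4}{0}$.
 \item If $CB(a)=3$, then $a$ tops an earlier tree. It is blue with $b$ via $\fconnect{3}{4}{2}$.
 \end{itemize}
\item $CB(b)=3$ and $CB(a)=3$. Then $a$ lies in an earlier tree, and the rank-$2$ points $y_n$ are excluded by $\fconnect{3}{4}{2}$. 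So the $y_n$ have rank $0$. The rank-$0$ points $x_n$ are excluded by $\bconnect{3}{4}{0}$ against $b$. The rank-$2$ points $x_n$ are blue with the $y_n$ by $\fconnect{2}{4}{0}$.
\item $CB(b)=3$ and $CB(a)=2$. The $x_n$ are rank $0$ in $a$'s $\omega^2$-block. The rank-$0$ points $y_n$ lie in later $\omega^2$-blocks, and are blue with $a$ via $\fconnect{2}{3}{0}$ or $\fconnect{2}{4}{0}$. So the $y_n$ have rank $2$.
 \begin{itemize}
 \item If $a$ is in an earlier tree than $b$, then $b$ is blue with the $x_n$ by $\bconnect{3}{4}{0}$.
 \item Otherwise the $x_n$ and the $y_n$ lie in different $\omega^2$-blocks of one tree, and are blue by $\bconnect{2}{3}{0}$.
 \end{itemize}
\end{itemize}
Each case yields a blue pair, which gives the contradiction.

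I expect the main obstacle to be the careful bookkeeping in this second part. The two parts I would check most carefully are the convergence fact, i.e.\ which ranks and boxes the approximating sequence must eventually occupy, and the distinction between degree-$3$ and degree-$4$ connections within the same tree versus across trees. The triangle check is mechanical once the blue pairs are listed.
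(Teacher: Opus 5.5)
Your proof is correct and takes essentially the same route as the paper. For blue triangles you check the same eight rank-compatible connection triples, grouped by rank pattern, and you carry out all of them where the paper does one and leaves the rest to the reader. For the red copy of $\omega\cdot 2+1$, your case split on the ranks of the two limits (and whether they share a tree) is a pruned version of the paper's Tables 1--3: the reductions via $\dconnect{1}{2}{0}$, $\dconnect{2}{3}{1}$ and $\dconnect{3}{4}{1}$ dispose of exactly the table entries the paper kills with those connections, and ``same tree versus different trees'' plays the role of the least common ancestor having rank $3$ or $4$.
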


 We have discovered the coloring $\cc_0$ and verified Proposition \ref{omegatofourmatrycoloring} using a computer search of matryoshka colorings. Since our algorithm and its implementation have not been verified to be bug-free, for the self-containment of the paper, we shall provide a brief proof here explaining the argumentation itself so that an interested reader may manually verify the proof themselves.\footnote{That said, we shall publicly share our code and other matryoshka colorings that we have found, together with an appendix that explains the search and verification algorithms that we used at the project webpage \href{https://users.metu.edu.tr/burakk/124F353}{https://users.metu.edu.tr/burakk/124F353} by October 2026.}

 \begin{proof}[Proof of Proposition \ref{omegatofourmatrycoloring}] Assume towards a contradiction that $\{\alpha,\beta,\delta\} \subseteq \omega^4$ is a blue homogeneous copy of $3$ with $CB(\alpha)=j_1$, $CB(\beta)=j_2$ and $CB(\delta)=j_3$. Since a matryoshka coloring does not color a pair of ordinals of the same Cantor-Bendixson rank to the color blue, we may assume without loss of generality that $0 \leq j_1 < j_2 < j_3 \leq 4$. By the definition of $\cc$, there have to be three connections
 \[j_3 {\overset{\Box}\Box} j_2\ \ j_2 {\overset{\Box}\Box} j_1\ \text{ and }\ j_3 {\overset{\Box}\Box} j_1
\]
that created the blue triangle $\{\alpha,\beta,\delta\}$, where each ${\overset{\Box}\Box}$ is a forward, backward or downward connections of some degree. By construction of $\cc$, the only triples of connections whose ranks are in this form are

\begin{center}
\begin{tabular}{c c c}

$\fconnect{3}{4}{2}$ & $\dconnect{2}{3}{1}$ & $\dconnect{3}{4}{1}$\\
\hline
$\fconnect{3}{4}{2}$ & $\fconnect{2}{4}{0}$ & $\bconnect{3}{4}{0}$\\
\hline
$\fconnect{3}{4}{2}$ & $\fconnect{2}{3}{0}$ & $\bconnect{3}{4}{0}$\\
\hline
$\fconnect{3}{4}{2}$ & $\bconnect{2}{3}{0}$ & $\bconnect{3}{4}{0}$\\
\hline
$\dconnect{3}{4}{1}$ & $\dconnect{1}{2}{0}$ & $\bconnect{3}{4}{0}$\\
\hline
$\dconnect{2}{3}{1}$ & $\dconnect{1}{2}{0}$ & $\fconnect{2}{4}{0}$\\
\hline
$\dconnect{2}{3}{1}$ & $\dconnect{1}{2}{0}$ & $\fconnect{2}{3}{0}$\\
\hline
$\dconnect{2}{3}{1}$ & $\dconnect{1}{2}{0}$ & $\bconnect{2}{3}{0}$\\
\end{tabular}   
\end{center}
Therefore, if we can show that none of these cases are possible, then we will have shown that there exists no blue homogeneous copy of $3$ with respect to $\cc$. We shall eliminate one of the cases, e.g. the third case, as an example. Suppose that the blue triangle $\{\alpha,\beta,\delta\}$ arose as a result of the connections
\[\fconnect{3}{4}{2}\ \ \ \fconnect{2}{3}{0}\ \ \ \bconnect{3}{4}{0}\]
By the first forward connection and the third backward connection, we have that $\alpha \in M_4(\theta_1,3)$, $\beta \in M_4(\theta_2,3)$ and $\delta \in M_4(\theta_3,3)$ for some $\theta_1 < \theta_3 < \theta_2$. Thus the matryoshka boxes of degree $3$ containing $\alpha$ and $\beta$ cannot be contained in the same matryoshka box of degree $4$. But then we cannot have the second connection. Having eliminated this case as an example, we leave it to the reader to eliminate the remaining cases using similar arguments and complete the proof that no blue triangles exist with respect to $\cc_0$.

For the second part of the statement, assume towards a contradiction that there exists a red homogeneous closed copy of $\omega \cdot 2+1$ with respect to $\cc$, say,
\[X=\{\boldord{\alpha}\}_{\alpha \leq \omega \cdot 2} \subseteq \omega^4\]
where $\alpha \mapsto \boldord{\alpha}$ is the unique order-homeomorphism from $\omega \cdot 2+1$ to $X$. By discarding elements if necessary, we may assume without loss of generality that
\[\boldord{k}<^*\boldord{\omega} \text{ and } \boldord{\omega+k+1}<^*\boldord{\omega+\omega} \text{ for all } k<\omega\]
and that
\[CB\left(\boldord{k}\right)=CB\left(\boldord{\ell}\right) \text{ and } CB\left(\boldord{\omega+k+1}\right)=CB\left(\boldord{\omega+\ell+1}\right)\] for all $k,\ell < \omega$, that is, each of the $\omega$-sequences
\[\left\{\boldord{k}:k<\omega\right\} \text{ and }\left\{\boldord{\omega+k}:1\leq k<\omega\right\}\]
is contained in a single level. As in the proof of the non-existence of a blue triangle, the rest of the proof is an exhaustion argument. We go through all possible quintuples of the Cantor-Bendixson ranks of
\begin{itemize}
    \item the first $\omega$-sequence,
    \item the first limit $\boldord{\omega}$,
    \item the second $\omega$-sequence,
    \item the second limit $\boldord{\omega+\omega}$, and
    \item the least common $<^*$-ancestor of the first and second limits.
\end{itemize}
In each case, we run into a contradiction due to the existence of some forward, backward or downward connection. Table \ref{Table1}, \ref{Table2} and \ref{Table3} given below list all these possible cases together with a connection that makes each case not possible. In these tables, rows (respectively, columns) parametrize the Cantor-Bendixson of ranks of the first (respectively, second) $\omega$-sequence and the first (respectively, second) limit.\footnote{The reader who wishes to manually verify each of these cases can use the visualization tool at the project webpage \href{https://users.metu.edu.tr/burakk/124F353}{https://users.metu.edu.tr/burakk/124F353} to build a better understanding of the purpose of each connection.}
\end{proof}

\begin{table}[h!]
    \centering
    \begin{tabular}{|c|c|c|c|c|c|c|c|c|c|c|}
    \hline
    \textbf{CB of $\mathbf{\omega}$-sequence and limit} & $\mathbf{(0,1)} $ & $\mathbf{(0,2)}$&  $\mathbf{(1,2)}$  \\
        \hline
        $\mathbf{(0,1)} $ &$\dconnect{1}{2}{0}$&$\dconnect{1}{2}{0}$ & $\dconnect{1}{2}{0}$\\ \hline
    \end{tabular}
    \vspace{0.2cm}
    \caption{The CB rank of the least common $<^*$-ancestor is $2$}
    \label{Table1}

    \begin{tabular}{|c|c|c|c|c|c|c|c|c|c|c|}
    \hline
        \textbf{CB of $\mathbf{\omega}$-sequence and limit} & $\mathbf{(0,1)} $ & $\mathbf{(0,2)}$ & $\mathbf{(1,2)}$  &$\mathbf{(0,3)}$ & $\mathbf{(1,3)}$&$\mathbf{(2,3)}$  \\
        \hline
        $\mathbf{(0,1)} $ &$\dconnect{1}{2}{0}$&$\dconnect{1}{2}{0}$&$\dconnect{1}{2}{0}$ & $\dconnect{1}{2}{0}$ & $\dconnect{1}{2}{0}$& $\dconnect{1}{2}{0}$\\ \hline
        $\mathbf{(0,2)} $ &$\dconnect{1}{2}{0}$&$\bconnect{2}{3}{0}$&$\dconnect{2}{3}{1}$&$\fconnect{2}{3}{0}$& $\dconnect{3}{4}{1}$& $\bconnect{2}{3}{0}$\\ \hline
        $\mathbf{(1,2)} $ &$\dconnect{1}{2}{0}$&$\dconnect{2}{3}{1}$&$\dconnect{2}{3}{1}$ & $\dconnect{2}{3}{1}$ &$\dconnect{2}{3}{1}$ &$\dconnect{2}{3}{1}$\\ \hline
    \end{tabular}   
    \vspace{0.2cm}
    \caption{The CB rank of the least common $<^*$-ancestor is $3$}
     \label{Table2}
    \begin{tabular}{|c|c|c|c|c|c|c|c|c|c|c|}
    \hline
        \textbf{CB of $\mathbf{\omega}$-sequence and limit} & $\mathbf{(0,1)} $ & $\mathbf{(0,2)}$ & $\mathbf{(1,2)}$ & $\mathbf{(0,3)}$ & $\mathbf{(1,3)}$ & $\mathbf{(2,3)}$  \\
        \hline
        $\mathbf{(0,1)} $ &$\dconnect{1}{2}{0}$&$\dconnect{1}{2}{0}$&$\dconnect{1}{2}{0}$&$\dconnect{1}{2}{0}$&$\dconnect{1}{2}{0}$&$\dconnect{1}{2}{0}$\\ \hline
        $\mathbf{(0,2)} $ &$\dconnect{1}{2}{0}$&$\fconnect{2}{4}{0}$&$\dconnect{2}{3}{1}$&$\fconnect{2}{4}{0}$&$\bconnect{3}{4}{0}$&$\bconnect{3}{4}{0}$\\ \hline
        $\mathbf{(1,2)} $ &$\dconnect{1}{2}{0}$&$\dconnect{2}{3}{1}$&$\dconnect{2}{3}{1}$&$\dconnect{2}{3}{1}$&$\dconnect{2}{3}{1}$&$\dconnect{2}{3}{1}$\\ \hline
        $\mathbf{(0,3)} $ &$\dconnect{1}{2}{0}$& $\fconnect{3}{4}{2}$&$\dconnect{2}{3}{1}$&$\bconnect{3}{4}{0}$&$\bconnect{3}{4}{0}$&$\bconnect{3}{4}{0}$\\ \hline
        $\mathbf{(1,3)} $ &$\dconnect{1}{2}{0}$&$\dconnect{3}{4}{1}$&$\dconnect{2}{3}{1}$&$\dconnect{3}{4}{1}$&$\dconnect{3}{4}{1}$&$\dconnect{3}{4}{1}$\\ \hline
        $\mathbf{(2,3)} $ &$\dconnect{1}{2}{0}$&$\fconnect{2}{4}{0}$&$\dconnect{2}{3}{1}$&$\fconnect{2}{4}{0}$&$\dconnect{3}{4}{1}$&$\fconnect{3}{4}{2}$\\ \hline
    \end{tabular}
    \vspace{0.2cm}
    \caption{The CB rank of the least common $<^*$-ancestor is $4$}
     \label{Table3}
\end{table}

\subsection{The general lower bound} In this section, we will prove the negative closed partition relation $\omega^4 \cdot (n-2)+1 \nrightarrow_{cl} (\omega \cdot n+1,3)^2$
for all integers $n \geq 3$. We first handle the case $n \geq 4$.

Let $n \geq 4$ be an integer and let $\gamma=\omega^4 \cdot (n-2)+1$. Consider the matryoshka coloring $\cc_1:[\gamma]^2 \rightarrow \{0,1\}$ obtained by adding the connections
\[\fconnect{4}{5}{0}\ \ \ \bconnect{4}{5}{0}\ \ \ \dconnect{4}{5}{3}\]
to the matyroshka coloring $\cc_0$ defined in Section \ref{omegafournegativesection}.

Let us intuitively explain how this coloring relates to $\cc_0$. Recall that the tree $T(\omega^4 \cdot i)$ is order-homeomorphic to $\omega^4+1$ for every $1 \leq i \leq (n-2)$. What the coloring $\cc_1$ does is that it colors the pairs in each $T(\omega^4 \cdot i)$ using $\cc_0$ as if this is a standalone copy of $\omega^4+1$. In addition to these, the pair $\{\alpha,\beta\}$ is colored to blue whenever $\alpha$ is the top point $\omega^4 \cdot i$ of the $i^{\text{th}}$-tree in the forest representation of $\gamma$ and $\beta$ is in the third level of the $i^{\text{th}}$-tree, or, in the bottom levels of other trees.

In contrast to $\cc_0$, there \textit{does} exist a red homogeneous copy of $\omega \cdot 2+1$ with respect to $\cc_1$ in each tree in the forest representation of $\gamma$. Let $X \subseteq \omega^4 \cdot (n-2)+1$ be any red homogeneous copy of $\omega \cdot 2 +1$ that is contained in a tree of the form $T(\omega^4 \cdot i)$. As in the proof of Proposition \ref{omegatofourmatrycoloring}, by discarding elements if necessary, we may assume that each $\omega$-sequence converging to a limit point is contained in a single level. It is not difficult to verify that, in the notation of the proof of Proposition \ref{omegatofourmatrycoloring}, the Cantor-Bendixson rank of
\begin{itemize}
    \item the first $\omega$-sequence is $0$,
    \item the first limit $\boldord{\omega}$ is $2$,
    \item the second $\omega$-sequence is $2$,
    \item the second limit $\boldord{\omega+\omega}$ is 4.
\end{itemize}
This simple but important observation allows us to prove the main theorem of this subsection.

\begin{proposition}\label{mainlowerbound} There exist no blue homogeneous copy of $3$ and no red homogeneous closed copy of $\omega \cdot n+1$ with respect to $\cc_1$.    
\end{proposition}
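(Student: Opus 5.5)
The plan is to handle the two halves separately. For the absence of blue triangles I will reduce to Proposition \ref{omegatofourmatrycoloring}. For the absence of a red closed copy of $\omega\cdot n+1$ I will count, tree by tree, how many limit points of such a copy each tree $T(\omega^4\cdot i)$ can contain, using the observation made just before the statement.

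\emph{No blue triangle.} First I record which pairs are blue under $\cc_1$. Every connection of degree at most $4$ only joins points lying in the same matryoshka box of degree $5$, that is, in the same tree $T(\omega^4\cdot i)$. Hence a pair of non-top points in different trees is red. The new connections $\fconnect{4}{5}{0}$, $\bconnect{4}{5}{0}$ and $\dconnect{4}{5}{3}$ make each top point $t_i=\omega^4\cdot i$ blue exactly to the rank-$3$ points of its own tree and to the rank-$0$ points of the other trees. Now take a blue triangle.
\begin{itemize}
\item If it contains no top point, its three pairwise blue points lie in one tree. Within a tree the coloring below the top is a copy of $\cc_0$, so Proposition \ref{omegatofourmatrycoloring} rules this out.
\item It cannot contain two top points, since points of equal rank are never blue.
\item If it contains exactly one top point $t_i$, the other two points are each of rank $3$ in tree $i$ or of rank $0$ in another tree. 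Two points of the same rank are red. A rank-$3$ point of tree $i$ and a rank-$0$ point of a different tree form a cross-tree non-top pair, which is red.
\end{itemize}
So no blue triangle exists.

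\emph{No red closed copy of $\omega\cdot n+1$.} Let $X$ be such a copy, with limit points $\boldord{\omega\cdot\ell}$ for $1\le\ell\le n$. Each tree $T(\omega^4\cdot i)$ is clopen in $\gamma$. So $X\cap T(\omega^4\cdot i)$ is clopen in $X$, and any two limits of $X$ lying in the same tree, together with tails of their approximating $\omega$-sequences, form a red closed copy of $\omega\cdot 2+1$ inside that tree. By the observation preceding the statement, in any such copy the second limit is the top $t_i$ and the first $\omega$-sequence has rank $0$. This gives two consequences.
\begin{itemize}
\item A tree cannot contain three limits of $X$: applying the observation to the first two forces the second to be $t_i$, the maximum of the tree, leaving no room for a third.
\item If a tree contains two limits of $X$, then $t_i\in X$ and $X$ contains infinitely many rank-$0$ points of tree $i$.
\end{itemize}
Now suppose two distinct trees $i\ne i'$ each contain two limits of $X$. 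Then $t_i\in X$ and $X$ contains rank-$0$ points of tree $i'$. These are blue to $t_i$ via $\fconnect{4}{5}{0}$ or $\bconnect{4}{5}{0}$, contradicting redness. Hence at most one tree holds two limits and every other tree holds at most one, so $X$ has at most $2+(n-3)=n-1<n$ limit points, a contradiction.

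The main obstacle is justifying the observation carefully, namely that every red closed copy of $\omega\cdot2+1$ inside a single tree has exactly the rank profile $(0,2,2,4)$. This requires rerunning the case exhaustion of Tables \ref{Table1}--\ref{Table3} with the top rank $4$ now allowed for the second limit. I also need the added connection $\dconnect{4}{5}{3}$ to kill the cases where the top serves as a limit over a rank-$3$ sequence. I would first enumerate these new cases, those with second limit of rank $4$, and check that only the profile $(0,2,2,4)$ survives. A secondary point to check is that passing to tails, so that each $\omega$-sequence sits in a single level, is compatible with the clopen-intersection argument.
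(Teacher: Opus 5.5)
Your proposal is correct and is essentially the paper's proof. Blue triangles are reduced to Proposition~\ref{omegatofourmatrycoloring} plus the top-point connections, and your explicit treatment of a rank-$3$ point of tree $i$ against a rank-$0$ point of another tree fills a case the paper's one-line argument passes over. Red copies are excluded by counting at most two limits per tree and at most one tree with two limits, via $\fconnect{4}{5}{0}$ and $\bconnect{4}{5}{0}$.

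One caution for your planned verification: the full profile $(0,2,2,4)$ is not actually forced. For example, $\{k+1\}_{k<\omega}\cup\{\omega^2\}\cup\{\omega^3\cdot(k+1)+\omega\}_{k<\omega}\cup\{\omega^4\}$ is a red closed copy of $\omega\cdot 2+1$ in the first tree with profile $(0,2,1,4)$. This does not hurt you, because your argument only uses two facts, and both hold:
\begin{itemize}
\item The second limit is the top, by Proposition~\ref{omegatofourmatrycoloring}.
\item The first sequence has rank $0$. The first limit has rank $1$ or $2$, since rank $3$ is blue to the top via $\dconnect{4}{5}{3}$. Then $\dconnect{1}{2}{0}$ and $\dconnect{2}{3}{1}$ rule out the profiles $(0,1)$ and $(1,2)$.
\end{itemize}
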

\begin{proof} Since no blue triangles with respect to $\cc_0$ exist, there exist no blue homogeneous copy of $3$ with respect to $\cc_1$ exist as well. Because, otherwise, a pair of ordinals with the same Cantor-Bendixson rank (namely, $0$ or $3$) would have already been colored to blue with respect to $\cc_0$, which is not the case.

Assume towards a contradiction that a red homogeneous closed copy of $\omega \cdot n+1$ exists. Since there are $(n-2)$ trees in the forest representation of $\gamma$ each of which can contain at most two limit points of this red homogeneous closed copy of $\omega \cdot n +1$, at least two of these trees of must contain at least two limit points. Therefore two red homogeneous closed copies of $\omega \cdot 2 +1$ exist in two different trees. By the observation above, for each of these red homogeneous closed copies of $\omega \cdot 2 +1$, the Cantor-Bendixson rank of the first $\omega$-sequence and the second limit point must be $0$ and $4$ respectively. However, in the coloring $\cc_1$, the top level of each tree is connected to the bottom levels of other trees, which leads to a contradiction.
\end{proof}

Finally, for the case $n=3$, the matryoshka coloring obtained by adding the connection
\[\dconnect{4}{5}{3}\]
to the matyroshka coloring $\cc_0$ defined in Section \ref{omegafournegativesection} witnesses the negative partition relation $\omega^4+1 \nrightarrow_{cl} (\omega \cdot 3+1,3)^2$ because there cannot be three limit points contained in the same tree due to these connections. Combining this result with Theorem \ref{mainuppertheorem}, Proposition \ref{omegatofourmatrycoloring} and Proposition \ref{mainlowerbound}, we have now proven Theorem \ref{maintheorem}.

\section{Topological partition ordinals}\label{sectiontoppart}

In this section, we prove Theorem \ref{toppartmaintheorem} and Corollary \ref{toppartordcorollary}. We obtain these results from a simple but powerful lemma regarding the set of Cantor-Bendixson ranks of elements of a closed copy of an ordinal of the form $\omega^{\alpha}+1$.

\subsection{More on the Cantor-Bendixson rank} Let $\alpha$ be an ordinal. The relationship between $CB(\alpha)$ defined in Section \ref{ordinalpreliminaries} and the well-known Cantor-Bendixson rank of a topological space is given as follows: Let $\gamma>\alpha$ be an ordinal. Then $CB(\alpha)$ is the greatest ordinal $\delta$ such that $\alpha \in \gamma^{(\delta)}$, where $\gamma^{(\delta)}$ denotes the $\delta^{\text{th}}$ Cantor-Bendixson derivative of the topological space $\gamma$ endowed with its order topology. For a proof of this fact, see \cite[Proposition 2.3.3]{Hilton16}.

It follows that if $(\alpha_n)_{n \in \mathbb{N}}$ is an increasing sequence of ordinals with $\beta=\sup_{n \in \mathbb{N}} \alpha_n$, then $CB(\beta) \geq \limsup_{n \rightarrow \infty} CB(\alpha_n)$ because, while iterating the Cantor-Bendixson derivative, $\beta$ cannot disappear before all but finitely many of $\alpha_n$'s disappear, being their limit in the order topology. Moreover, by the same reasoning, if the sequence $(CB(\alpha_n))_{n \in \mathbb{N}}$ is eventually constant, then we must have $CB(\beta) > \lim_{n \rightarrow \infty} CB(\alpha_n)$. Having made these two important observations, we now proceed to prove the main lemma of this section.

Let $X$ be a set of ordinals. We define \textit{the Cantor-Bendixson set} of $X$ to be the set $\{CB(\alpha):\alpha \in X\}$ of the Cantor-Bendixson ranks of elements of $X$. Abusing the notation, we shall also denote this set by $CB(X)$. The main lemma of this section is the following.
\begin{lemma}\label{toppartordinalmainlemma} Let $1 \leq \alpha \leq \theta<\omega_1$ and $X \subseteq \omega^{\theta}+1$ be a closed copy of $\omega^{\alpha}+1$. Then the order type of $CB(X)$ is at least $\alpha+1$.    
\end{lemma}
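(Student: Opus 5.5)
Let $f:\omega^{\alpha}+1\to X$ be the order-homeomorphism. The plan is to prove, by induction on $\alpha\ge 1$, the slightly stronger statement that the order type of $CB(X)$ is at least $\alpha+1$ \emph{and} that $CB(\max X)$ is the largest element of $CB(X)$. This gives an extra handle: we know where the top element sits. The key tool is the pair of observations above: for an increasing sequence $(\alpha_n)$ with supremum $\beta$, we have $CB(\beta)\ge\limsup CB(\alpha_n)$, with strict inequality when the ranks are eventually constant. Since $f$ is an order-homeomorphism, $f(\sup_n \xi_n)=\sup_n f(\xi_n)$ for increasing sequences in $\omega^\alpha+1$. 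Moreover, $CB$ is monotone along $<^*$-type limits in the appropriate sense. More simply, every element of $X$ below $\max X$ is a limit-approximant of $\max X$ in the Cantor--Bendixson sense, so $CB(\max X)\ge CB(x)$ for all $x\in X$. I would justify this via derivatives: $X$ is homeomorphic to $\omega^\alpha+1$, and $\max X$ survives longest in $X$'s derivatives. I must be careful, however, since CB in $\gamma$ and CB in $X$ differ; I expect to use instead only the limit inequalities.

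\textbf{Successor case} $\alpha=\beta+1$. Write the top $\omega^{\beta+1}$ as the supremum of $\omega^\beta\cdot n$, and apply the induction hypothesis to each closed copy $X_n=f[(\omega^\beta\cdot(n-1),\omega^\beta\cdot n]]$. These are closed copies of $\omega^\beta+1$ in the appropriate sense (translate by $\omega^\beta\cdot(n-1)$; the map $\xi\mapsto\omega^\beta\cdot(n-1)+\xi$ is an order-homeomorphism). So each $CB(X_n)$ has order type $\ge\beta+1$, with maximum $\delta_n=CB(f(\omega^\beta\cdot n))$. The tops $f(\omega^\beta\cdot n)$ increase to $\max X$. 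Pass to a subsequence along which $\delta_n$ is nondecreasing, which is possible since ordinals admit no infinite decreasing sequences. If $\delta_n$ is eventually constant $=\delta$, then $CB(\max X)>\delta$. Adding it on top of $CB(X_n)$ (all of whose elements are $\le\delta$) gives order type $\ge\beta+2=\alpha+1$. If instead $\delta_n$ is not eventually constant, then $CB(\max X)\ge\sup\delta_n$ and $CB(X_n)\subseteq\delta_n+1$ with type $\ge\beta+1$. Taking $n$ with $\delta_n<\delta_m$ for some later $m$, we get $CB(X_n)\cup\{\delta_m\}$ of type $\ge\beta+2$. The maximality claim for $\max X$ follows from the limsup inequality together with induction.

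\textbf{Limit case.} Choose $\beta_n\nearrow\alpha$ and apply the hypothesis to copies of $\omega^{\beta_n}+1$ inside $X$ ending at $f(\omega^{\beta_n})$. Then $CB(X)$ contains sets of type $\ge\beta_n+1$ for all $n$, so its type is $\ge\alpha$. For the extra $+1$, note that $CB(\max X)\ge\limsup CB(f(\omega^{\beta_n}))$. I need $CB(\max X)$ to strictly exceed all ranks in $\bigcup_n CB(X_n)$, or at least to exceed a cofinal initial segment of type $\alpha$. If the ranks $\delta_n$ are eventually constant, strictness gives this directly. Otherwise $\sup\delta_n$ is a limit of strictly larger ranks, and the type-$\alpha$ part lies below $\sup\delta_n\le CB(\max X)$. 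In that situation $\max X$ adds a new element at or above the supremum.

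The main obstacle is the maximality bookkeeping: showing that every rank occurring in $X$ below the top is at most $CB(\max X)$, and handling the non-eventually-constant case where the top's rank equals the supremum rather than exceeding it. I would first try to derive the needed inequality from the topological characterization of $CB$ via derivatives of $\gamma$, using that $X\cap\gamma^{(\delta)}$ is closed in $X$.
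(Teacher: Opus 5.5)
There is a genuine gap. Your induction rests on the strengthened claim that $CB(\max X)$ is the largest element of $CB(X)$, and this claim is false already for $\alpha=1$. For $\theta\ge 3$, consider
\[X=\{\omega^2\}\cup\{\omega^2+k:1\le k<\omega\}\cup\{\omega^2+\omega\}\subseteq\omega^{\theta}+1.\]
This is a closed copy of $\omega+1$: the point $\omega^2$ is isolated in $X$, because $X\cap(\xi,\omega^2]=\{\omega^2\}$ for every $\xi<\omega^2$. Yet $CB(\omega^2)=2>1=CB(\max X)$.

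The point is that $CB$ is the rank in the ambient ordinal. Isolated points of $X$, such as the first point of each of your blocks $X_n$, can therefore have arbitrarily large rank. Both of your inductive steps use the false claim essentially. You need $CB(X_n)\subseteq\delta_n+1$, where $\delta_n$ is the rank of the top of $X_n$, in order to place $CB(\max X)$ (or $\delta_m$) above all of $CB(X_n)$. Without this, $CB(X_n)$ may contain ranks exceeding $CB(\max X)$, and appending $CB(\max X)$ need not increase the order type. The ranks of the block tops alone say nothing about the rest of each block.

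The missing idea is to bound the ranks of \emph{all} elements of some tail block by $CB(\eta)$, where $\eta=\max X$, rather than by the rank of the block's own top. The paper does this by contradiction. Suppose every block $X_n$ contained some $x_n$ with $CB(x_n)\ge CB(\eta)$. Then $x_n\nearrow\eta$, and the two limit inequalities give $CB(\eta)>CB(\eta)$, whether infinitely many $CB(x_n)$ exceed $CB(\eta)$ or cofinitely many equal it. So some block (indeed cofinitely many) has all its ranks below $CB(\eta)$. The induction hypothesis for that block, together with the extra element $CB(\eta)$, gives order type at least $\alpha+1$.

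You can also see this directly. Write $\eta=\eta'+\omega^{CB(\eta)}$ in Cantor normal form. Every ordinal in $(\eta',\eta)$ has rank $<CB(\eta)$, and by continuity at the top a tail of $X$ lies in this interval.

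In the limit case you should accordingly use the blocks $f[(\omega^{\beta_{n-1}},\omega^{\beta_n}]]$ rather than the initial segments $f[[0,\omega^{\beta_n}]]$, since only a tail of $X$ is guaranteed to have ranks below $CB(\eta)$. Cofinitely many of these blocks then contribute subsets of type at least $\beta_n+1$ lying below $CB(\eta)$. With this in place, neither the strengthened hypothesis nor the nondecreasing-subsequence case split is needed.
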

\begin{proof} We shall prove this by transfinite induction on $\alpha \geq 1$. The claim clearly holds for $\alpha=1$ because, for any closed copy $X$ of $\omega+1$, the Cantor-Bendixson rank of the unique limit point of $X$ has to exceed the Cantor-Bendixson rank of some of its predecessors by the observations above and hence, the order type of $CB(X)$ is at least $2$.

Let $1 \leq \alpha < \theta$ and suppose that the claim holds for $\alpha$. Let $X \subseteq \omega^{\theta}+1$ be a closed copy of $\omega^{\alpha+1}+1$, say, $\varphi: X \rightarrow \omega^{\alpha+1}+1$ is an order-homeomorphism. Consider the sets
\begin{align*}
X_0&=\{x \in X: \varphi(x) \leq \omega^{\alpha}\}\\
X_n&=\{x \in X: \omega^{\alpha}\cdot n <\varphi(x) \leq \omega^{\alpha}\cdot (n+1)\} \text{ for every integer } n \geq 1
\end{align*}
Then each $X_n$ is a closed copy of $\omega^{\alpha}+1$. Set $\eta=\sup(X)=\varphi^{-1}\left(\omega^{\alpha+1}\right)$. We claim that there exists $\widehat{n} \in \mathbb{N}$ such that $CB(\delta)<CB(\eta)$ for every $\delta \in X_{\widehat{n}}$.

Assume towards a contradiction that, for each $n \in \mathbb{N}$, we could choose $\delta_n \in X_n$ with $CB(\delta_n) \geq CB(\eta)$. If $CB(\delta_n) > CB(\eta)$ holds for infinitely many $n \in \mathbb{N}$, then we would have
\[CB(\eta)=CB\left(\sup_{n \in \mathbb{N}} \delta _n \right) \geq \limsup_{n \rightarrow \infty} CB(\delta_n)>CB(\eta)\]
by the first observation above. If $CB(\delta_n) = CB(\eta)$ holds for cofinitely many $n \in \mathbb{N}$, then we would have
\[CB(\eta)=CB\left(\sup_{n \in \mathbb{N}} \delta _n \right) > \lim_{n \rightarrow \infty} CB(\delta_n)=CB(\eta)\]
by the second observation above.

Now choose $\widehat{n} \in \mathbb{N}$ as above. Since $X_{\widehat{n}}$ is a closed copy of $\omega^{\alpha}+1$, by the inductive assumption, the order type of $CB(X_{\widehat{n}})$ is at least $\alpha+1$ and hence, the order type of $CB(X)$ is at least $(\alpha+1)+1$. This completes the successor step of the induction.

Let $1 \leq \gamma \leq \theta$ be a limit and suppose that the claim holds for all $1 \leq \alpha<\gamma$.  Let $X \subseteq \omega^{\theta}+1$ be a closed copy of $\omega^{\gamma}+1$, say, $\phi: X \rightarrow \omega^{\gamma}+1$ is an order-homeomorphism. Choose an increasing sequence $(\alpha_n)_{n \in \mathbb{N}}$ with $\sup_{n \in \mathbb{N}} \alpha_n = \gamma$. Consider the sets
\begin{align*}
X_0&=\{x \in X: \phi(x) \leq \omega^{\alpha_0}\}\\
X_n&=\{x \in X: \omega^{\alpha_{n-1}}<\phi(x) \leq \omega^{\alpha_n}\} \text{ for every integer } n \geq 1
\end{align*}
Then each $X_n$ is a closed copy of $\omega^{\alpha_n}+1$. By inductive assumption, the order type of each $CB(X_n)$ is at least $\alpha_n+1$.

Set $\eta=\sup(X)=\phi^{-1}(\omega^{\gamma})$. Imitating the argument in the successor stage, one can show that, for cofinitely many $n \in \mathbb{N}$, we have $CB(\delta)<CB(\eta)$ for all $\delta \in X_n$.  It follows that the order type of $CB(X)=CB\left(\bigcup_{n \in \mathbb{N}} X_n \cup \{\eta\}\right)$ is at least $\gamma+1$. This completes the induction.\end{proof}

\subsection{A necessary condition for topological partition ordinals} An immediate corollary of Lemma \ref{toppartordinalmainlemma} is that, for any $1 \leq \alpha \leq \theta<\omega_1$ and any closed copy $X$ of $\omega^{\alpha}$ inside $\omega^{\theta}$, the order type of $CB(X)$ is at least $\alpha$ because, otherwise, we would not be able to obtain the order type at least $\alpha+1$ after adding the single limit point $\sup(X)$. We are now ready to prove the remaining main results of the paper.
\begin{proof}[Proof of Theorem \ref{toppartmaintheorem}] Suppose that $\theta<R(\alpha,3)$. Consider the canonical partition
\[V=\left\{V_1^{\delta}\right\}_{0 \leq \delta < \theta} \text{ where } V_1^{\delta}=\{\beta \in \omega^{\theta}: CB(\beta)=\delta\}\]
of the ordinal $\omega^{\theta}$. By the definition of the ordinal Ramsey number $R(\alpha,3)$, there exists a coloring $\mathfrak{r}: [\theta]^2 \rightarrow \{0,1\}$ with respect to which there exist no red homogeneous set of order type $\alpha$ and no blue homogeneous set of size $3$. We define the graph $\GG=(V,E)$ by
\[\left\{V_1^{\delta},V_1^{\delta'}\right\}\in E\ \text{ if and only if }\ \mathfrak{r}(\{\delta,\delta'\})=1\]
Consider the coloring $\cc_{\GG}: \left[\omega^{\theta}\right]^2 \rightarrow \{0,1\}$. Since there exist no blue homogeneous set of size $3$ with respect to $\mathfrak{r}$, there exists no blue homogeneous set of size $3$ with respect to $\cc_{\GG}$. If there were a red homogeneous closed copy $X \subseteq \omega^{\theta}$ of $\omega^{\alpha}$ with respect to $\cc_{\GG}$, then, by (the immediate corollary of) Lemma \ref{toppartordinalmainlemma}, the set
\[CB(X)=\left\{\delta \in \theta:\ V_1^{\delta} \cap X \neq \emptyset\right\}\]
would have to be a red homogeneous set of order type at least $\alpha$ with respect to $\mathfrak{r}$, which is a contradiction. Thus $\omega^{\theta}\nrightarrow_{cl} (\omega^{\alpha},3)^2$.\end{proof}

\begin{proof}[Proof of Corollary \ref{toppartordcorollary}] Suppose that $\omega^{\alpha}$ is a topological partition ordinal, that is, $\omega^{\alpha} \rightarrow_{top} (\omega^{\alpha},3)^2$. By \cite[Corollary 2.17]{Hilton16JSL}, we have $\omega^{\alpha} \rightarrow_{cl} (\omega^{\alpha},3)^2$. It follows from Theorem \ref{toppartmaintheorem} that $\alpha \geq R(\alpha,3)$. Hence $\alpha \rightarrow(\alpha,3)^2$, that is, $\alpha$ is a partition ordinal.\end{proof}
\bibliography{references}{}

@article {ErdosRado53,
    AUTHOR = {Erd\"{o}s, P. and Rado, R.},
     TITLE = {A problem on ordered sets},
   JOURNAL = {J. London Math. Soc.},
  FJOURNAL = {The Journal of the London Mathematical Society},
    VOLUME = {28},
      YEAR = {1953},
     PAGES = {426--438},
      ISSN = {0024-6107},
   MRCLASS = {27.2X},
  MRNUMBER = {58687},
MRREVIEWER = {Djuro Kurepa},
       DOI = {10.1112/jlms/s1-28.4.426},
       URL = {https://doi.org/10.1112/jlms/s1-28.4.426},
}

@article {ErdosRado56,
    AUTHOR = {Erd\"{o}s, P. and Rado, R.},
     TITLE = {A partition calculus in set theory},
   JOURNAL = {Bull. Amer. Math. Soc.},
  FJOURNAL = {Bulletin of the American Mathematical Society},
    VOLUME = {62},
      YEAR = {1956},
     PAGES = {427--489},
      ISSN = {0002-9904},
   MRCLASS = {05.0X},
  MRNUMBER = {81864},
MRREVIEWER = {L. Gillman},
       DOI = {10.1090/S0002-9904-1956-10036-0},
       URL = {https://doi.org/10.1090/S0002-9904-1956-10036-0},
}

@article {Baumgartner86,
    AUTHOR = {Baumgartner, James E.},
     TITLE = {Partition relations for countable topological spaces},
   JOURNAL = {J. Combin. Theory Ser. A},
  FJOURNAL = {Journal of Combinatorial Theory. Series A},
    VOLUME = {43},
      YEAR = {1986},
    NUMBER = {2},
     PAGES = {178--195},
      ISSN = {0097-3165},
   MRCLASS = {05A17 (04A20 54A25)},
  MRNUMBER = {867644},
MRREVIEWER = {N. Hindman},
       DOI = {10.1016/0097-3165(86)90059-2},
       URL = {https://doi.org/10.1016/0097-3165(86)90059-2},
}

@incollection {CaicedoHilton17,
    AUTHOR = {Caicedo, Andr\'{e}s Eduardo and Hilton, Jacob},
     TITLE = {Topological {R}amsey numbers and countable ordinals},
 BOOKTITLE = {Foundations of mathematics},
    SERIES = {Contemp. Math.},
    VOLUME = {690},
     PAGES = {87--120},
 PUBLISHER = {Amer. Math. Soc., Providence, RI},
      YEAR = {2017},
   MRCLASS = {03E02 (03E10 54A25 54H05)},
  MRNUMBER = {3656308},
MRREVIEWER = {P\'{e}ter Komj\'{a}th},
       DOI = {10.1090/conm/690},
       URL = {https://doi.org/10.1090/conm/690},
}

@article {Mermelstein20,
    AUTHOR = {Mermelstein, Omer},
     TITLE = {The closed ordinal {R}amsey number {$R^{cl}(\omega^2,3) =
              \omega^6$}},
   JOURNAL = {Proc. Amer. Math. Soc.},
  FJOURNAL = {Proceedings of the American Mathematical Society},
    VOLUME = {148},
      YEAR = {2020},
    NUMBER = {1},
     PAGES = {413--419},
      ISSN = {0002-9939},
   MRCLASS = {03E02 (03E10)},
  MRNUMBER = {4042862},
       DOI = {10.1090/proc/14697},
       URL = {https://doi.org/10.1090/proc/14697},
}

@article {Mermelstein19,
    AUTHOR = {Mermelstein, Omer},
     TITLE = {Calculating the closed ordinal {R}amsey number
              {$R^{cl}(\omega\cdot 2, 3)$}},
   JOURNAL = {Israel J. Math.},
  FJOURNAL = {Israel Journal of Mathematics},
    VOLUME = {230},
      YEAR = {2019},
    NUMBER = {1},
     PAGES = {387--407},
      ISSN = {0021-2172},
   MRCLASS = {03E02 (03E10 54A25)},
  MRNUMBER = {3941152},
MRREVIEWER = {David Jose Fern\'{a}ndez Bret\'{o}n},
       DOI = {10.1007/s11856-019-1827-0},
       URL = {https://doi.org/10.1007/s11856-019-1827-0},
}

@article {Ramsey29,
    AUTHOR = {Ramsey, F. P.},
     TITLE = {On a {P}roblem of {F}ormal {L}ogic},
   JOURNAL = {Proc. London Math. Soc. (2)},
  FJOURNAL = {Proceedings of the London Mathematical Society. Second Series},
    VOLUME = {30},
      YEAR = {1929},
    NUMBER = {4},
     PAGES = {264--286},
      ISSN = {0024-6115},
   MRCLASS = {99-04},
  MRNUMBER = {1576401},
       DOI = {10.1112/plms/s2-30.1.264},
       URL = {https://doi.org/10.1112/plms/s2-30.1.264},
}

@article {KayaSaglam21,
    AUTHOR = {Kaya, Burak and Sa\u{g}lam, Irmak},
     TITLE = {On the closed {R}amsey numbers {$R^{cl}(\omega + n, 3)$}},
   JOURNAL = {Israel J. Math.},
  FJOURNAL = {Israel Journal of Mathematics},
    VOLUME = {245},
      YEAR = {2021},
    NUMBER = {2},
     PAGES = {963--989},
      ISSN = {0021-2172,1565-8511},
   MRCLASS = {03E02 (54A25)},
  MRNUMBER = {4358268},
MRREVIEWER = {Mohammad\ Golshani},
       DOI = {10.1007/s11856-021-2239-5},
       URL = {https://doi.org/10.1007/s11856-021-2239-5},
}

@article {ErdosRado67,
    AUTHOR = {Erd\H{o}s, P. and Rado, R.},
     TITLE = {Partition relations and transitivity domains of binary
              relations},
   JOURNAL = {J. London Math. Soc.},
  FJOURNAL = {The Journal of the London Mathematical Society},
    VOLUME = {42},
      YEAR = {1967},
     PAGES = {624--633},
      ISSN = {0024-6107,1469-7750},
   MRCLASS = {05.04},
  MRNUMBER = {218248},
MRREVIEWER = {Brian\ Rotman},
       DOI = {10.1112/jlms/s1-42.1.624},
       URL = {https://doi.org/10.1112/jlms/s1-42.1.624},
}

@article {LarsonMitchell97,
    AUTHOR = {Larson, Jean A. and Mitchell, William J.},
     TITLE = {On a problem of {E}rd\"{o}s and {R}ado},
   JOURNAL = {Ann. Comb.},
  FJOURNAL = {Annals of Combinatorics},
    VOLUME = {1},
      YEAR = {1997},
    NUMBER = {3},
     PAGES = {245--252},
      ISSN = {0218-0006,0219-3094},
   MRCLASS = {05C55 (05C20 06A07)},
  MRNUMBER = {1630775},
MRREVIEWER = {Sheshayya\ A.\ Choudum},
       DOI = {10.1007/BF02558478},
       URL = {https://doi.org/10.1007/BF02558478},
}

@phdthesis{Hilton16,
  title={Combinatorics of countable ordinal topologies},
  author={Hilton, Jacob Haim},
  year={2016},
  school={University of Leeds}
}

@article {Hilton16JSL,
    AUTHOR = {Hilton, Jacob},
     TITLE = {The topological pigeonhole principle for ordinals},
   JOURNAL = {J. Symb. Log.},
  FJOURNAL = {The Journal of Symbolic Logic},
    VOLUME = {81},
      YEAR = {2016},
    NUMBER = {2},
     PAGES = {662--686},
      ISSN = {0022-4812,1943-5886},
   MRCLASS = {03E02 (03E15 03E35 03E45 03E55)},
  MRNUMBER = {3519451},
MRREVIEWER = {J\"{o}rg\ D.\ Brendle},
       DOI = {10.1017/jsl.2015.45},
       URL = {https://doi.org/10.1017/jsl.2015.45},
}

@article {GalvinLarson74,
    AUTHOR = {Galvin, Fred and Larson, Jean},
     TITLE = {Pinning countable ordinals},
   JOURNAL = {Fund. Math.},
  FJOURNAL = {Polska Akademia Nauk. Fundamenta Mathematicae},
    VOLUME = {82},
      YEAR = {1974/75},
     PAGES = {357--361},
      ISSN = {0016-2736,1730-6329},
   MRCLASS = {04A10},
  MRNUMBER = {360280},
MRREVIEWER = {E.\ C.\ Milner},
       DOI = {10.4064/fm-82-4-357-361},
       URL = {https://doi.org/10.4064/fm-82-4-357-361},
}

@article {Kim95,
    AUTHOR = {Kim, Jeong Han},
     TITLE = {The {R}amsey number {$R(3,t)$} has order of magnitude
              {$t^2/\log t$}},
   JOURNAL = {Random Structures Algorithms},
  FJOURNAL = {Random Structures \& Algorithms},
    VOLUME = {7},
      YEAR = {1995},
    NUMBER = {3},
     PAGES = {173--207},
      ISSN = {1042-9832,1098-2418},
   MRCLASS = {05C55},
  MRNUMBER = {1369063},
MRREVIEWER = {Pavel\ Valtr},
       DOI = {10.1002/rsa.3240070302},
       URL = {https://doi.org/10.1002/rsa.3240070302},
}

@article {Schipperus10,
    AUTHOR = {Schipperus, Rene},
     TITLE = {Countable partition ordinals},
   JOURNAL = {Ann. Pure Appl. Logic},
  FJOURNAL = {Annals of Pure and Applied Logic},
    VOLUME = {161},
      YEAR = {2010},
    NUMBER = {10},
     PAGES = {1195--1215},
      ISSN = {0168-0072,1873-2461},
   MRCLASS = {03E02 (03E35 05C55)},
  MRNUMBER = {2652192},
MRREVIEWER = {J.\ M.\ Henle},
       DOI = {10.1016/j.apal.2009.12.007},
       URL = {https://doi.org/10.1016/j.apal.2009.12.007},
}
\bibliographystyle{amsalpha}

\end{document}